\newcommand{\burl}[1]{\textcolor{blue}{\url{#1}}}
\numberwithin{equation}{section}
\newtheorem{thm}{Theorem}[section]
\newtheorem{cor}[thm]{Corollary}
\newtheorem{lem}[thm]{Lemma}
\newtheorem{prop}[thm]{Proposition}
\theoremstyle{plain}
\newtheorem{lemma}[thm]{Lemma}
\newtheorem{remark}[thm]{Remark}
\newcommand\be{\begin{equation}}
\newcommand\ee{\end{equation}}
\newcommand\bea{\begin{eqnarray}}
\newcommand\eea{\end{eqnarray}}
\newcommand\bi{\begin{itemize}}
\newcommand\ei{\end{itemize}}
\newcommand\ben{\begin{enumerate}}
\newcommand\een{\end{enumerate}}
\newcommand\bc{\begin{center}}
\newcommand\ec{\end{center}}
\newcommand\ba{\begin{array}}
\newcommand\ea{\end{array}}
\newcommand{\R}{\ensuremath{\mathbb{R}}}
\newcommand{\twocase}[5]{#1 \begin{cases} #2 & \text{{\rm #3}}\\ #4
&\text{{\rm #5}} \end{cases}   }
\newcommand{\threecase}[7]{#1 \begin{cases} #2 & \text{{\rm #3}}\\ #4
&\text{{\rm #5}}\\ #6 & \texttt{{\rm #7}} \end{cases}   }
\newcommand{\kkot}[1]{ \frac{\sin \pi {#1} }{\pi {#1} } }
\newcommand{\hphi}{\widehat{\phi}}  %phi^
\newcommand{\innerproduct}[1]{\ensuremath{\langle #1 \rangle}}
\newcommand{\norm}[1]{\left\lVert#1\right\rVert}
\newlength{\arrow}
\title[Optimal Test Functions for Bounding Average Rank]{Determining Optimal Test Functions for Bounding the Average Rank in Families of $L$-Functions}
\author{Jesse Freeman}
\address{Department of Mathematics and Statistics, Williams College, Williamstown, MA 01267}
\email{\textcolor{blue}{\href{mailto:jbf1@williams.edu}{jbf1@williams.edu}}}
\author{Steven J. Miller}
\email{\textcolor{blue}{\href{mailto:sjm1@williams.edu}{sjm1@williams.edu}},  \textcolor{blue}{\href{Steven.Miller.MC.96@aya.yale.edu}{Steven.Miller.MC.96@aya.yale.edu}}}
\address{Department of Mathematics and Statistics, Williams College, Williamstown, MA 01267}
\thanks{This research was supported by NSF grants DMS1265673 and Williams College. We thank Eyvi Palsson, Mihai Stoiciu, and the referee for helpful comments and discussions.}
\subjclass[2010]{Primary: 11Mxx; Secondary: 45Bxx}
\keywords{Random matrix theory, $L$-functions, low-lying zeros, optimal test functions}
\date{\today}
\begin{document}

\maketitle

\begin{abstract} Given an $L$-function, one of the most important questions concerns its vanishing at the central point; for example, the Birch and Swinnerton-Dyer conjecture states that the order of vanishing there of an elliptic curve $L$-function equals the rank of the Mordell-Weil group. The Katz and Sarnak Density Conjecture states that this and other behavior is well-modeled by random matrix ensembles. This correspondence is known for many families when the test functions are suitably restricted. For appropriate choices, we obtain bounds on the average order of vanishing at the central point in families. In this note we report on progress in determining the optimal test functions for the various classical compact groups for different support restrictions, and discuss how this relates to improved rank bounds.
\end{abstract}

%\tableofcontents

%%%%%%%%%%%%%%%%%%%%%%%%%%%%%%%%%%%%%%%%%%%%%%%%%%%%%%%%%%%%%%%%%%%%%%%%%%%%%%%%%%%%%%%%%%%%%%%%%%%%
%%%%%%%%%%%%%%%%%%%%%%%%%%%%%%%%%%%%%%%%%%%%%%%%%%%%%%%%%%%%%%%%%%%%%%%%%%%%%%%%%%%%%%%%%%%%%%%%%%%%
%%%%%%%%%%%%%%%%%%%%%%%%%%%%%%%%%%%%%%%%%%%%%%%%%%%%%%%%%%%%%%%%%%%%%%%%%%%%%%%%%%%%%%%%%%%%%%%%%%%%

\section{Introduction}

%%%%%%%%%%%%%%%%%%%%%%%%%%%%%%%%%%%%%%%%%%%%%
%%%%%%%%%%%%%%%%%%%%%%%%%%%%%%%%%%%%%%%%%%%%%
\subsection{Background}

While the importance of random matrices in mathematics and related disciplines had been noticed at least as early as Wishart's work \cite{Wis} in the late 1920s, for us in number theory the story begins with the connections observed by Montgomery and Dyson \cite{Mon} in the 1970s. Montgomery was studying the pair-correlation of zeros of the Riemann zeta function, and the behavior was identical to that of certain random matrix ensembles which had been extensively studied due to their applicability in nuclear physics. Briefly, characteristic polynomials (and their eigenvalues) of the classical compact groups have been observed to model well $L$-functions (and their critical zeros). While we will concentrate on low-lying zeros, i.e., zeros near the central point, in families of $L$-functions, there is an extensive literature on other statistics, including $n$-level correlations \cite{Hej,Mon,RS}, spacings \cite{Od1,Od2}, and moments \cite{CFKRS}. See \cite{FM,Ha} for a brief history of the subject and \cite{Con,For,KaSa1,KaSa2,KeSn1,KeSn2,KeSn3,Meh,MT-B,T}) for some articles and textbooks on the connections.

In many of the earlier works on the correspondences between the two subjects, the  statistics studied were insensitive to the behavior of finitely many zeros. This led to the introduction of a new statistic, the $n$-level density, as often the zeros near the central point are related to important arithmetic quantities, with the Birch and Swinnerton-Dyer conjecture (stating that the order of vanishing of the $L$-function at the central point equals the rank of the Mordell-Weil group of rational solutions) the most famous example. In this paper we concentrate on the $1$-level density, which we define in detail in \S\ref{sec:defn1leveldensity}. We report on recent results from the first author's honors thesis at Williams College, supervised by the second author, where building on methods introduced in \cite{ILS} optimal test functions are constructed for various statistics for different support ranges. The main application of these theorems are improved estimates on the average vanishing at the central point for families of $L$-functions. In addition to being of general interest, such results have important applications (for example, in \cite{IS} good estimates here are connected to the Landau-Siegel zero question).

In the arguments below we concentrate on the limiting behavior. An important topic for future research is to include lower order terms and determine the optimal test functions for various regimes where the limiting behavior has not yet been reached. These regimes are quite important as they are the ones that can be investigated numerically, and often the data gathered is at odds with the limiting predictions as the rate of convergence is abysmally slow. The prime example is that of whether or not their is excess rank in families of elliptic curves (see \cite{BMSW} for a nice summary of data and conjectures); while earlier investigations indicated that such bias might persist, later studies \cite{W} went far enough to see the average rank drop, and new random matrix models have been introduced that have the correct limiting behavior and successfully model the observed behavior for small conductors \cite{DHKMS1,DHKMS2}. There are now many results on lower order terms in families, such as \cite{HKS,MMRW,Mil2,Yo1}, and the hope is that the methods of this paper can be extended to include these to refine estimates for finite conductors. \\ \

\emph{On a personal note, the second author investigated questions on rates of convergence with Ram Murty in \cite{MM} (explicitly, proving effective bounds on families of elliptic curves modulo $p$ (for $p$ prime tending to infinity) obeying the Sato-Tate Law). It is a pleasure to dedicate this work to him on the occasion of his 60\textsuperscript{th}  birthday, and we hope to report on extending our result to lower order terms before his next big celebration!}

%%%%%%%%%%%%%%%%%%%%%%%%%%%%%%%%%%%%%%%%%%%%%
%%%%%%%%%%%%%%%%%%%%%%%%%%%%%%%%%%%%%%%%%%%%%

\subsection{$n$-Level Density}\label{sec:defn1leveldensity}

As alluded to above, the behavior of zeros far from the central point exhibit a remarkable universality across $L$-functions. Unfortunately it is significantly harder to study one $L$-function's zeros near the central point. The reason is that there are only a few normalized zeros near the central point, and there is thus no possibility of averaging if we restrict ourselves to just one object (in the extreme case of whether or not the $L$-function vanishes, we just have a `yes-no' question). To  make progress, we instead study a family of $L$-functions. The Katz-Sarnak philosophy \cite{KaSa1,KaSa2} states that the behavior of a family of $L$-functions should be well-modeled by a corresponding classical compact groups, with the conductor in the family tending to infinity playing the same roll as the growing matrix size; for alternative approaches to modeling the behavior of zeros, see \cite{CFZ1,CFZ2,GHK}.

We briefly describe the main statistic studied, the $n$-level density (though we will report on progress on the 1-level only here, see \cite{F} for additional results). For ease of exposition we assume the Generalized Riemann Hypothesis, so given an $L(s,f)$ all the zeros are of the form $1/2 + i\gamma_{j;f}$ with $\gamma_{j;f}$ real. Our statistic makes sense more generally, but we lose the interpretation of ordered zeros and connections with nuclear physics; the main use of GRH is to extend the support calculation for many of the number theory computations. We assume the reader is familiar with $n$-level densities; for more detail on these statistics see the seminal work by Iwaniec, Luo and Sarnak \cite{ILS}, who introduced them, or \cite{AAILMZ} for an expanded discussion (which formed the basis of the quick summary below).

Let $\phi(x) = \prod_{j=1}^n \phi_j(x_j)$ where each $\phi_j$ is an even Schwartz function such that the Fourier transforms \be \widehat{\phi_j}(y) \ := \ \int_{-\infty}^\infty \phi_j(x) e^{-2\pi i xy} dx\ee are compactly supported. The $n$-level density for $f$ with test function $\phi$ is
\begin{eqnarray}
D_n(f,\phi) & \ = \ &  \sum_{j_1, \dots, j_n \atop j_\ell \neq j_m} \phi_1\left(L_f \gamma_{j_1;f}\right) \cdots \phi_n\left(L_f
\gamma_{j_n;f}\right),
\end{eqnarray} where $L_f$ is a scaling parameter which is frequently related to the conductor.
Given a family $\mathcal{F} = \cup_N \mathcal{F}_N$ of $L$-functions with conductors tending to infinity, the $n$-level density $D_n(\mathcal{F},\phi,w)$ with test function $\phi$ and non-negative weight function $w$ is defined by \be D_n(\mathcal{F},\phi,w) \ := \ \lim_{N \to \infty} \frac{\sum_{f\in \mathcal{F}_N} w(f) D_n(f,\phi)  }{ \sum_{f\in \mathcal{F}_N} w(f)}. \ee Frequently one chooses $\mathcal{F}_N$ to be either all forms with conductor equal to $N$, or conductor at most $N$.

Unlike the $n$-level correlations of a family, which have a universal limit as the height of the zero tends to infinity, Katz and Sarnak \cite{KaSa1, KaSa2} proved that the $n$-level density is different for each classical compact group. They were able to obtain closed form determinant expansions; while these expressions can be hard to use for $n \ge 2$ (see \cite{HM} for a discussion on the benefits of an alternative), they are very easy to use for the 1-level.

Let $K(y) := \kkot{y}$ and $K_\epsilon(x,y) := K(x-y) + \epsilon K(x+y)$ for $\epsilon = 0, \pm 1$. If $G_N$ is the family of $N\times N$ unitary, symplectic or orthogonal families (split or not split by sign), the $n$-level density for the eigenvalues converges as $N\to\infty$ to \bea & & \int_{-\infty}^\infty \cdots \int_{-\infty}^\infty \phi(x_1,\dots,x_n) W_{n,G}(x_1,\dots,x_n) dx_1 \cdots dx_n \nonumber\\ & & \ \ \ \ \  \ = \ \int_{-\infty}^\infty \cdots \int_{-\infty}^\infty \hphi(y_1,\dots,y_n) \widehat{W}_{n,G}(y_1,\dots,y_n) dy_1 \cdots dy_n, \eea where
\begin{eqnarray}\label{eqdensitykernels}
W_{m,{\rm SO(even)}}(x) &\ =\ & \det (K_1(x_i,x_j))_{i,j\leq m} \nonumber\\
W_{m,{\rm SO(odd)}}(x) & = & \det (K_{-1}(x_i,x_j))_{i,j\leq
m}  + \sum_{k=1}^m
\delta(x_k) \det(K_{-1}(x_i,x_j))_{i,j\neq k} \nonumber\\
W_{m, {\rm O}}(x) & = & \frac12  W_{m,{\rm SO(even)}}(x) + \frac12  W_{m,{\rm SO(odd)}}(x)
\nonumber\\ W_{m, {\rm U}}(x) & = & \det (K_0(x_i,x_j))_{i,j\leq
m} \nonumber\\ W_{m, {\rm Sp}}(x) &=& \det(K_{-1}(x_i,x_j))_{i,j\leq m}.
\end{eqnarray}

While these densities are all different, for the 1-level density with test functions whose Fourier transforms are supported in $(-1, 1)$, the three orthogonal flavors cannot be distinguished from each other in this regime, though they can be distinguished from the unitary and symplectic.

In many of the calculations it is convenient to shift to the Fourier transform side. Letting \be \threecase{\eta(u) \ = \ }{1}{if $|u| < 1$}{1/2}{if $|u| = 1$}{0}{if $|u| > 1$} \ee and $\delta_0$ denote the standard Dirac Delta functional, for the $1$-level densities we have
\begin{eqnarray}
\widehat{W}_{1,{\rm SO(even)}}(u) & = & \delta_0(u) + \frac12  \eta(u)
\nonumber\\ \widehat{W}_{1,O}(u) & = & \delta_0(u) + \frac12
\nonumber\\ \widehat{W}_{1,{\rm SO(odd)}}(u) & = & \delta_0(u) - \frac12
\eta(u) + 1 \nonumber\\ \widehat{W}_{1,Sp}(u) & = & \delta_0(u) -
\frac12  \eta(u) \nonumber\\ \widehat{W}_{1,U}(u) & = & \delta_0(u),
\end{eqnarray} Note that the first three densities agree for $|u| < 1$ and split (i.e., become distinguishable) for $|u| \geq 1$; alternatively, one could use the 2-level density which suffices to distinguish all candidates for arbitrarily small support (see \cite{Mil2}).

As stated earlier, the Katz-Sarnak Density Conjecture is that the behavior of zeros near the central point in a family of $L$-functions (as the conductors tend to infinity) agrees with the behavior of eigenvalues near 1 of a classical compact group (as the matrix size tends to infinity). There is now  an extensive body of work supporting this for numerous families and various levels of support, including Dirichlet characters, elliptic curves, cuspidal newforms, symmetric powers of ${\rm GL}(2)$ $L$-functions, and certain families of ${\rm GL}(4)$ and ${\rm GL}(6)$ $L$-functions; see for example \cite{DM1, DM2, ER-GR, FiM, FI, Gao, Gu, HM, HR, ILS, KaSa2, LM, Mil1, MilPe, OS1, OS2, RR, Ro, Rub, Ya, Yo2}. This correspondence between zeros and eigenvalues allows us, at least conjecturally, to assign a definite symmetry type to each family of $L$-functions (see \cite{DM2, ShTe} for more on identifying the symmetry type of a family).

%%%%%%%%%%%%%%%%%%%%%%%%%%%%%%%%%%%%%%%%%%%%%
%%%%%%%%%%%%%%%%%%%%%%%%%%%%%%%%%%%%%%%%%%%%%

\subsection{Main Result}\label{sec:mainresult}

One of the most important applications of the $n$-level density is to estimate the average order of vanishing of $L$-functions $L(s,f)$ at the central point in a family; this is the analytic rank, and is denoted ${\rm Rank}(f)$. While in some families it is natural to use slowly varying weights (such as the Petersson weights for families of holomorphic cusp forms), with additional work these weights can often be removed (see \cite{ILS}).

If we assume GRH for our family of $L$-functions, then all critical zeros have real part 1/2. Further, if our test function $\phi$ is non-negative, then in the $1$-level density we obtain an upper bound for the average rank by removing the contribution from all zeros not at the central point: \be\label{eq:formulatoboundrank} \lim_{N\to\infty} \frac{\sum_{f\in \mathcal{F}_N} w(f) {\rm Rank}(f) \phi(0)}{ \sum_{f\in \mathcal{F}_N} w(f)} \ \le \ \int_{-\infty}^\infty \phi(x) W_{1,G}(x) dx \ = \ \int_{-\infty}^\infty \hphi(y) \widehat{W}_{1,G}(y) dy. \ee

In practice, we can only establish the $n$-level density for test functions with restricted support. On the number theory side, the goal is to verify the correspondence for as large of support as possible, as we can then use \eqref{eq:formulatoboundrank} to bound the rank: \be\label{rankboundplusphicriterion}\lim_{N\to\infty} {\rm AveRank}(\mathcal{F}_N) \ \le \ \frac{\int_{-\infty}^\infty \hphi(y) \widehat{W}_{1,G}(y) dy}{\phi(0)}. \ee Note that instead of trying to increase the support for the 1-level density we could shift to studying higher level densities. While this gives us better bounds for high vanishing at the central point, the probability of vanishing to order $r$ or higher decays like $c_n / r^n$, unfortunately $c_n$ grows with $n$ and the result is worse than the bounds from the 1-level density for small $r$ (which are the ones we care about most); see \cite{HM}.

Using the Paley-Wiener theorem to note the admissible test functions are the modulus squared of an entire function of exponential type 1 (or its Fourier transform as a convolution), Plancherel's theorem to convert to an equivalent minimization problem, and some Fredholm theory, in Appendix A of  \cite{ILS} the optimal test functions are computed for the 1-level density for the classical compact groups under the assumption that the support of the Fourier transform is contained in $[-2, 2]$. Our main result is to generalize these computations to larger support and higher $n$.

\begin{thm}\label{thm:mainresult} Let $\phi$ be an even Schwartz test function such that ${\rm supp}(\hphi) \subset [-2\sigma, 2\sigma]$, with $\sigma = s/2$. Then for $2 < s < 3$ the test function which minimizes the right hand side of \eqref{eq:formulatoboundrank} is given by  $\widehat{\phi} = g_{\mathcal{G},s} \ast \widecheck{g_{\mathcal{G},s}}$. Here $\ast$ represents convolution, $\widecheck{g_{\mathcal{G},s}}(x) = \overline{g_{\mathcal{G},s}}(-x)$, and $g_{\mathcal{G},s}$ is given by
\begin{equation}\label{SOEvenExplicitOptimalFunction}
    g_{{\rm SO(even)},s}(x) \ = \  \lambda_{{\rm SO(even)},s}
    \begin{cases}
      c_{1,\mathcal{G},s} \cos\left( \frac{|x|}{\sqrt{2}} \right) \ &|x| \le s/2 - 1 \\
      \cos\left( \frac{|x|}{2} - \frac{(\pi + 1)}{4} \right) \ &s/2 - 1 \le |x| \le 2 - s/2\\
      \frac{c_{1,\mathcal{G},s}}{\sqrt{2}} \sin\left( \frac{x-1}{\sqrt{2}} \right)  + c_{3,\mathcal{G},s} \ &2 - s/2 < |x| < s/2 \\
      0 &|x| \ge s/2,
  \end{cases}
      \end{equation}
      and
    \begin{equation}\label{OrthogOptimalFunction}
g_{{\rm O},s}(x) \ = \
\begin{cases}
 \frac{1}{1 + s/2} \ &|x| < s/2 \\
 0 \ &|x| \ge s/2
\end{cases}
    \end{equation}
for $\mathcal{G} \ = \  {\rm O}$, and
  \begin{equation}\label{SOOdd/SpExplicitOptimalFunction}
    g_{\mathcal{G},s}(x) \ = \  \lambda_{\mathcal{G},s}
    \begin{cases}
      c_{1,\mathcal{G},s} \cos\left( \frac{|x|}{\sqrt{2}} \right) \ &|x| \le s/2 - 1 \\
      \cos\left( \frac{|x|}{2} + \frac{(\pi - 1)}{4} \right) \ &s/2 - 1 \le |x| \le 2 - s/2\\
      \frac{-c_{1,\mathcal{G},s}}{\sqrt{2}} \sin\left( \frac{x-1}{\sqrt{2}} \right) + c_{3,\mathcal{G},s} \ &2 - s/2 < |x| < s/2 \\	
      0 & |x| \ge s/2
  \end{cases}
  \end{equation}
 for $\mathcal{G} =  {\rm SO(odd)}$ or ${\rm Sp}$. Here, the $c_{i,\mathcal{G}}$ and $\lambda_{\mathcal{G}}$ are easily explicitly computed, and are given later in \eqref{ActualCoefficientsSOEven}, \eqref{ActualCoeffSOOdd/Sp}, \eqref{SO(even)Scaling}, \eqref{SpScaling}, and \eqref{SOOddScaling}.
\end{thm}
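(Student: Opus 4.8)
\emph{Proof strategy.} We adapt the variational method of \cite{ILS}. Write $\sigma=s/2$, so $2<s<3$ becomes $1<\sigma<3/2$. By Paley--Wiener, a nonnegative admissible $\phi$ with $\operatorname{supp}(\hphi)\subset[-2\sigma,2\sigma]$ is exactly one of the form $\hphi=g\ast\widecheck g$ with $g\in L^2$ supported in $[-\sigma,\sigma]$, and a standard symmetrization (the functional sees $g$ only through $\hphi$, which is unchanged under $g\mapsto\widecheck g$) lets us take $g$ real and even. Then $\phi(0)=\big(\int_{-\sigma}^{\sigma}g\big)^2$, while Plancherel applied to \eqref{eqdensitykernels} gives
\[
\int_{-\infty}^{\infty}\hphi(y)\,\widehat{W}_{1,\mathcal{G}}(y)\,dy \;=\; \norm{g}_2^2\;\pm\;\tfrac12\langle g,Kg\rangle\;+\;c_{\mathcal{G}}\Big(\int_{-\sigma}^{\sigma}g\Big)^2,
\]
where $K$ is the self-adjoint operator on $L^2[-\sigma,\sigma]$ with kernel $\mathbf{1}\{|x-y|\le 1\}$, the sign is $+$ for $\mathrm{SO(even)}$ and $-$ for $\mathrm{SO(odd)}$ and $\mathrm{Sp}$, and $c_{\mathcal{G}}\in\{0,\tfrac12,1\}$ is the constant term of $\widehat{W}_{1,\mathcal{G}}$; for $\mathrm{O}$ one replaces $K$ by the rank-one operator $g\mapsto\big(\int_{-\sigma}^{\sigma}g\big)\mathbf{1}$. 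As the ratio in \eqref{rankboundplusphicriterion} is scale-invariant in $g$ and $c_{\mathcal{G}}$ only shifts the optimal value, the problem reduces to minimizing $\langle g,(I\pm\tfrac12 K)g\rangle$ subject to $\int_{-\sigma}^{\sigma}g=1$. Since $\norm{K}<2$, the operator $I\pm\tfrac12 K$ is positive definite, so this minimum exists, is unique, and (via Lagrange multipliers) is the unique solution of the Euler--Lagrange equation $g(x)\pm\tfrac12\int_{[-\sigma,\sigma]\cap[x-1,\,x+1]}g(y)\,dy=\nu$ on $[-\sigma,\sigma]$, for a constant $\nu$. For $\mathrm{O}$ this equation forces $g$ constant, and Cauchy--Schwarz gives $\norm{g}_2^2\big/(\int g)^2\ge\tfrac1{2\sigma}$ with equality for constant $g$; rescaling gives \eqref{OrthogOptimalFunction}.

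\emph{Solving the integral equation for $2<s<3$.} For $1<\sigma<3/2$ the clamped endpoints $\max(-\sigma,x-1)$ and $\min(\sigma,x+1)$ split $[-\sigma,\sigma]$ into exactly the three regimes $|x|\le\sigma-1$, $\sigma-1\le|x|\le 2-\sigma$, $2-\sigma\le|x|\le\sigma$ appearing in \eqref{SOEvenExplicitOptimalFunction}--\eqref{SOOdd/SpExplicitOptimalFunction}; the ordering $\sigma-1<2-\sigma<\sigma$ of the breakpoints is precisely the hypothesis $2<s<3$ (for $s\le2$ the middle regime collapses and one is back in the setting of \cite{ILS}). Folding $[x-1,x+1]$ onto $[0,\cdot]$ by evenness and differentiating the Euler--Lagrange equation: on $|x|\le\sigma-1$ a second differentiation yields $g''+\tfrac12 g=0$, hence $g(x)=c_1\cos(|x|/\sqrt2)$; on $\sigma-1\le|x|\le 2-\sigma$ it yields $g''+\tfrac14 g=0$, hence $g(x)=\cos\!\big(|x|/2+\beta_{\mathcal{G}}\big)$ for a phase $\beta_{\mathcal{G}}$; and on $2-\sigma\le|x|\le\sigma$ the first-order relation $g'(x)=\pm\tfrac12 g(x-1)$, with $x-1$ lying in the first regime where $g$ is already a cosine, integrates to $g(x)=\pm\tfrac{c_1}{\sqrt2}\sin\!\big(\tfrac{x-1}{\sqrt2}\big)+c_3$. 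The signs in the first and third regimes flip between $\mathrm{SO(even)}$ and $\mathrm{SO(odd)}/\mathrm{Sp}$ because of the $\pm$ in the Euler--Lagrange equation, which is why \eqref{SOEvenExplicitOptimalFunction} carries $+c_1/\sqrt2$ while \eqref{SOOdd/SpExplicitOptimalFunction} carries $-c_1/\sqrt2$.

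\emph{Fixing the constants and concluding.} The remaining unknowns $c_1$, $c_3$, the phase $\beta_{\mathcal{G}}$, and the multiplier $\nu$ are determined by: continuity of $g$ at the breakpoints $|x|=\sigma-1$ and $|x|=2-\sigma$; the prescribed jumps in $g'$ there coming from the corners of the clamped limits of integration (equivalently, the Euler--Lagrange equation evaluated in a neighborhood of each breakpoint); and the Euler--Lagrange equation at $x=\sigma$, tying the boundary value $g(\sigma^-)$ to $\nu$. Solving this small linear system, and then fixing the overall scale by the chosen normalization of $\phi$, produces the explicit $c_{i,\mathcal{G},s}$ and $\lambda_{\mathcal{G},s}$ recorded in \eqref{ActualCoefficientsSOEven}, \eqref{ActualCoeffSOOdd/Sp}, \eqref{SO(even)Scaling}, \eqref{SpScaling}, \eqref{SOOddScaling} (in particular $\beta_{\mathrm{SO(even)}}=-(\pi+1)/4$ and $\beta_{\mathrm{SO(odd)}}=\beta_{\mathrm{Sp}}=(\pi-1)/4$). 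Since $I\pm\tfrac12 K$ is positive definite and the constraint is linear, this critical point is the global minimizer, so $\hphi=g\ast\widecheck g$ built from the above $g$ is optimal.

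\emph{Main obstacle.} Everything after the reduction is elementary in principle, but the bookkeeping is where the work lies: because $2\sigma<3$, the shift by $\pm1$ carries a point of one regime into a \emph{different} regime, and after folding by evenness a shifted argument may even straddle two of them, so one must carefully track which ODE and which boundary data govern each subinterval and then solve the matching system without sign errors. This is exactly the step not present in \cite{ILS}, where $s\le2$ and the regime structure is simpler; $2<s<3$ is the first range in which the middle interval $\sigma-1\le|x|\le2-\sigma$ appears.
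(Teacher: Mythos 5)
Your proposal follows the paper's strategy closely --- the Paley--Wiener factorization $\hphi=g\ast\check g$, the Fredholm positivity of $I+K_{\mathcal{G},\sigma}$, the three-interval decomposition at $|x|=\sigma-1$ and $|x|=2-\sigma$, and the derived ODEs $g''+\tfrac12 g=0$, $g''+\tfrac14 g=0$, $g'(x)=\alpha_{2,\mathcal{G}}\,g(x-1)$ are exactly what the paper does in \S\S2--5, and your normalized formulation with constraint $\int g=1$ and multiplier $\nu$ is equivalent to the paper's criterion $(I+K_{\mathcal{G},\sigma})g=1$ --- but it has one genuine gap. You differentiate the Euler--Lagrange integral equation twice, and at shifted arguments, without establishing that the optimal $g$ has any regularity beyond $L^2$; the kernels $m_{\mathcal{G}}$ are merely bounded and discontinuous, so a priori $K_{\mathcal{G},\sigma}g$ is only continuous and nothing forces $g$ to be differentiable, let alone $C^2$. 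The paper devotes all of \S3 to this: it shows the optimal $g$ is bounded, then Lipschitz via the estimate $\int_{-\sigma}^{\sigma}\lvert m_{\mathcal{G}}(x_1-y)-m_{\mathcal{G}}(x_2-y)\rvert\,dy\le \lvert x_1-x_2\rvert$, invokes Rademacher's theorem to get a.e.\ differentiability, and then bootstraps to $C^\infty$ on the interiors of the $J_i$ via the fundamental theorem of calculus applied to $g = 1 - K_{\mathcal{G},\sigma}g$. Without this chain your ODE reduction is purely formal; you should at least note that the convolution structure of $K_{\mathcal{G},\sigma}$ is what upgrades the regularity of the fixed point.

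A smaller structural difference worth flagging: you carry the phase $\beta_{\mathcal{G}}$ on the middle interval as an unknown to be fixed by the matching system, whereas the paper's Lemma \ref{UniqueOneParameterFamily} pins it before any matching --- the first-order delay equation $g'(x)=\alpha_{2,\mathcal{G}}g(1-x)$ on $J_2$ has, by a $2\times2$ zero-determinant (nullspace) computation, only a one-parameter amplitude-only family of $C^2$ solutions, so $\beta_{\mathcal{G}}$ is forced to equal $-(\pi+2\alpha_{2,\mathcal{G}})/4$ independently of boundary data. Both routes reach the same answer, but the paper's keeps the remaining linear system at $3\times3$ in $(c_1,c_2,c_3)$, from which $c_2=0$ and the explicit $c_1,c_3,\lambda$ drop out cleanly.
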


\ \\
\begin{quote}
\texttt{For the rest of the paper, we let $\sigma = s/2$. Unless otherwise stated, $1 < \sigma < 1.5$, corresponding to the range for $s$. This notation is slightly at odds with other works in the literature, where the support of $\hphi$ is contained in $(-\sigma, \sigma)$ and for us it is $(-2\sigma, 2\sigma)$; we have elected to proceed this way as the natural object is $g$,  and the support of $\hphi$ is double that of $g$.}\end{quote}
\ \\

Moreover, the optimal function $g_{\mathcal{G},s}$, along with its coefficients $c_{i\mathcal{G},s}$ and its scaling factor $\lambda_{\mathcal{G},s}$, all depend on $s$. As this will be clear from equations \eqref{ActualCoefficientsSOEven} to \eqref{SOOddScaling}, to simplify the notation we often omit the subscript $s$ or $\sigma$,  as these are fixed in the analysis.

To help illustrate the main theorem, we include plots of the optimal $g$ for the groups SO(even), SO(odd), and Sp below in Figure \ref{fig:optplots}, and the plots for the corresponding optimal $\phi$ in Figure \ref{fig:optphiplots}; we do not include the optimal plots for the mixed orthogonal case, as the resulting $g$ is constant.

\begin{figure}[h]
\begin{center}
\scalebox{.5545}{\includegraphics{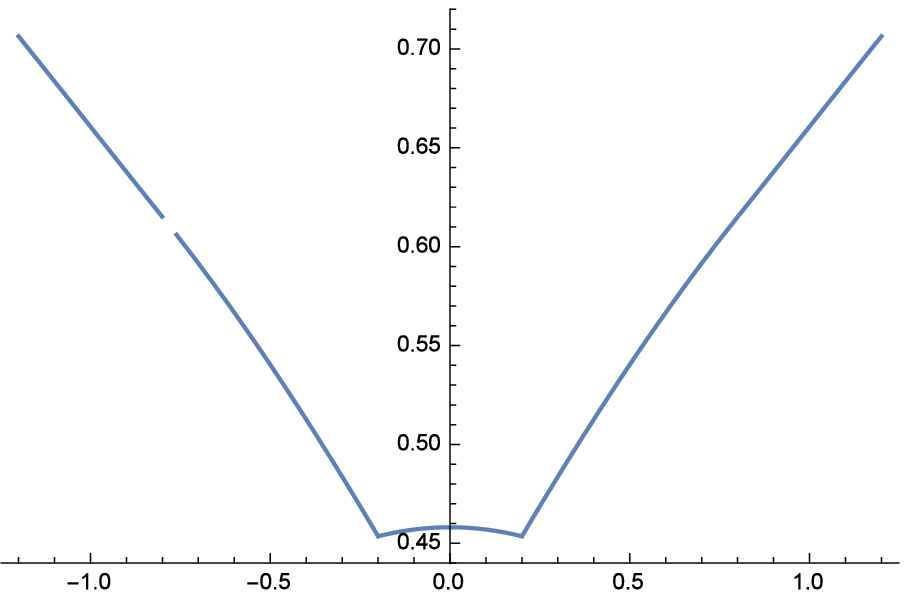}} \ \ \scalebox{.5545}{\includegraphics{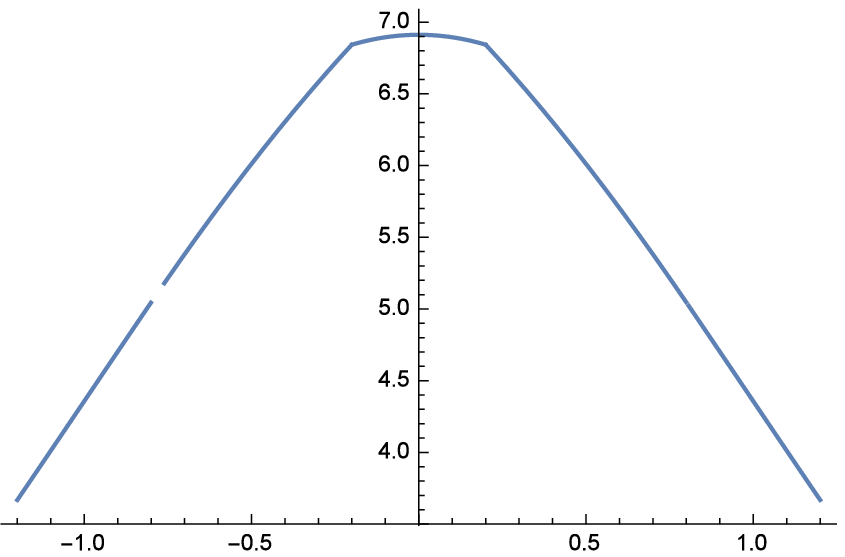}} \ \ \scalebox{.5545}{\includegraphics{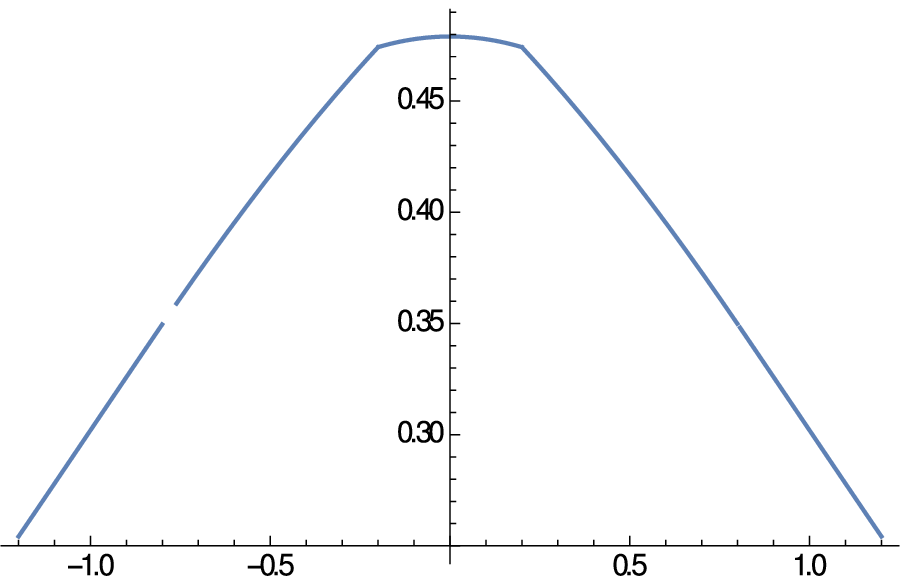}}
\caption{\label{fig:optplots} Plots of the optimal $g_{\mathcal{G}}$ with $\sigma = 1.2$ (and thus $s=2.4$). Left: Optimal SO(even) function. Middle: Optimal Sp function. Right: Optimal SO(odd) function. }
\end{center}
\end{figure}

\begin{figure}[h]
\begin{center}
\scalebox{.5545}{\includegraphics{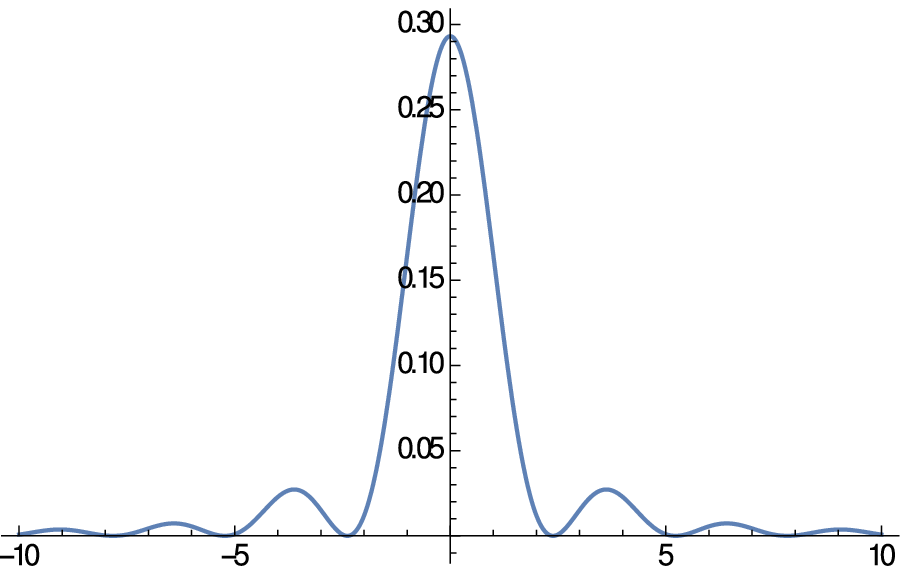}} \ \ \scalebox{.5545}{\includegraphics{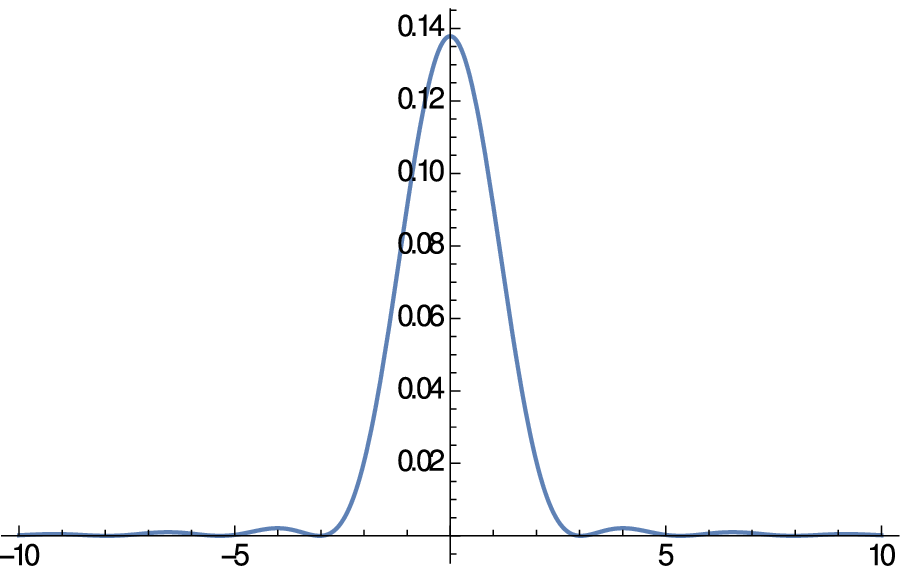}} \ \ \scalebox{.5545}{\includegraphics{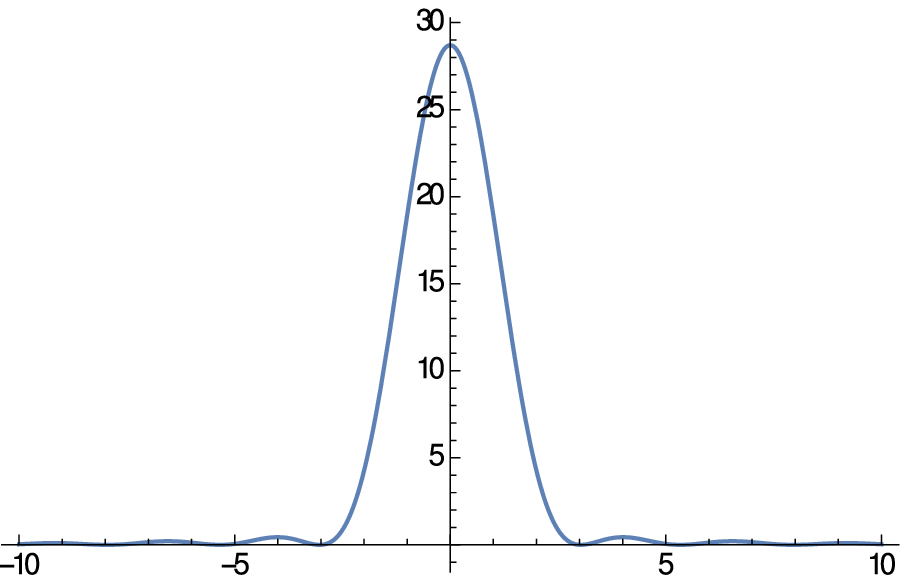}}
\caption{\label{fig:optphiplots} Plots of the optimal $\phi$ with $\sigma = 1.2$ (and thus $s=2.4$). Left: Optimal SO(even) function. Middle: Optimal Sp function. Right: Optimal SO(odd) function.  }
\end{center}
\end{figure}

As an immediate corollary we obtain the following bounds on the average rank. We isolate these upper bounds below. The record for largest support for the 1-level density are families of cuspidal newforms \cite{ILS} and Dirichlet $L$-functions \cite{FiM} (though see also \cite{AM} for Maass forms), where we can take $\sigma < 2$. It is possible to obtain better bounds on vanishing by using the 2 or higher level densities, though as remarked above in practice the reduced support means these results are not better than the 1-level for extra vanishing at the central point but do improve as we ask for more and more vanishing (see \cite{HM}).

\begin{cor}\label{cor:averankmainresult} Let $\mathcal{F}$ be a family of $L$-functions such that, in the limit as the conductors tend to infinity, the 1-level density is known to agree with the scaling limit of unitary, symplectic or orthogonal matrices. Then for every $\epsilon > 0$ in the limit the average rank is bounded above by
\begin{equation}
  \resizebox{.9\hsize}{!}{$
  \varepsilon +
  \begin{cases}
    \tiny  \frac{4 \sqrt{2} \sin \left(\frac{1}{4} (3-2 \sigma)\right)+2 (\sigma-1) \sin \left(\frac{1}{4} (-2 \sigma+\pi +3)\right)+\sin \left(\frac{1}{4} (2 \sigma+\pi -3)\right) \left(\sqrt{2} (\sigma+1) \tan \left(\frac{\sigma-1}{\sqrt{2}}\right)+2\right)}{8 \sqrt{2} \sin \left(\frac{1}{4} (3-2 \sigma)\right)+8 (\sigma-1) \sin \left(\frac{1}{4} (-2 \sigma+\pi +3)\right)+4 \sqrt{2} \sigma \sin \left(\frac{1}{4} (2 \sigma+\pi -3)\right) \tan \left(\frac{\sigma-1}{\sqrt{2}}\right)} \normalsize &\mathcal{G} \ = \ {\rm SO(even)} \\
    \tiny  \frac{-2 (\sigma-1) \sin \left(\frac{1}{4} (2 \sigma+\pi -3)\right)-4 \sqrt{2} \sin \left(\frac{1}{4} (3-2 \sigma)\right)+\sin \left(\frac{1}{4} (-2 \sigma+\pi +3)\right) \left(\sqrt{2} (\sigma-3) \tan \left(\frac{\sigma-1}{\sqrt{2}}\right)+2\right)}{8 (\sigma-1) \sin \left(\frac{1}{4} (2 \sigma+\pi -3)\right)+8 \sqrt{2} \sin \left(\frac{1}{4} (3-2 \sigma)\right)-4 \sqrt{2} (\sigma-2) \sin \left(\frac{1}{4} (-2 \sigma+\pi +3)\right) \tan \left(\frac{\sigma-1}{\sqrt{2}}\right)} \normalsize &\mathcal{G} \ = \ {\rm Sp} \\
    \tiny  \frac{6 (\sigma-1) \sin \left(\frac{1}{4} (2 \sigma+\pi -3)\right)+4 \sqrt{2} \sin \left(\frac{1}{4} (3-2 \sigma)\right)+\sin \left(\frac{1}{4} (-2 \sigma+\pi +3)\right) \left(\sqrt{2} (5-3 \sigma) \tan \left(\frac{\sigma-1}{\sqrt{2}}\right)+2\right)}{8 (\sigma-1) \sin \left(\frac{1}{4} (2 \sigma+\pi -3)\right)+8 \sqrt{2} \sin \left(\frac{1}{4} (3-2 \sigma)\right)-4 \sqrt{2} (\sigma-2) \sin \left(\frac{1}{4} (-2 \sigma+\pi +3)\right) \tan \left(\frac{\sigma-1}{\sqrt{2}}\right)} \normalsize &\mathcal{G} \ = \ {\rm SO(odd)} \\
    \frac{1}{2\sigma} + \frac{1}{2} &\mathcal{G} \ = \ {\rm O}.
  \end{cases}
  $
}
  \label{AvgRankBoundsForExtendedSupport}
\end{equation}
\end{cor}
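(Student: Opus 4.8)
\textbf{Reduction to a quadratic problem.} By the Paley--Wiener theorem, an even Schwartz $\phi\ge 0$ with ${\rm supp}(\hphi)\subset[-2\sigma,2\sigma]$ is exactly one of the form $\hphi=g\ast\widecheck{g}$ with $g$ supported in $[-\sigma,\sigma]$; passing to the closure one works with $g\in L^2([-\sigma,\sigma])$, checking at the end that the resulting $\hphi$ is a limit of admissible test functions, and a standard symmetrization lets us take $g$ real and even, so $\widecheck{g}=g$ and $\hphi=g\ast g$. Writing $\widehat{W}_{1,\mathcal{G}}=\delta_0+m_{\mathcal{G}}$ with $m_{\mathcal{G}}$ the bounded piece listed in the text, Fourier inversion gives $\phi(0)=\big(\int_{-\sigma}^{\sigma}g\big)^2$ and $\int \hphi(y)\,\widehat{W}_{1,\mathcal{G}}(y)\,dy=\|g\|_2^2+\int\hphi(y)\, m_{\mathcal{G}}(y)\,dy$, and since $m_{\mathcal{G}}$ is an explicit combination of $\mathbf 1$ and $\eta$, the last integral is a combination of $\phi(0)$ and $\tfrac12\int_{-1}^{1}(g\ast g)$. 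Two cases are now immediate: for $\mathcal{G}={\rm O}$ the functional in \eqref{rankboundplusphicriterion} collapses to $\|g\|_2^2/(\int g)^2+\tfrac12$, minimized by a constant $g$ via Cauchy--Schwarz, yielding \eqref{OrthogOptimalFunction} and the bound $\tfrac1{2\sigma}+\tfrac12$; and the ${\rm SO(odd)}$ functional equals the ${\rm Sp}$ functional plus $1$ (the extra $+1$ in $\widehat{W}_{1,{\rm SO(odd)}}$ contributes exactly $\phi(0)/\phi(0)$), so ${\rm SO(odd)}$ and ${\rm Sp}$ share the \emph{same} optimizer and their bounds differ by $1$.

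\textbf{Euler--Lagrange equation.} Since the functional is homogeneous of degree $0$ in $g$, we may impose the normalization $\int_{-\sigma}^{\sigma}g=1$ and minimize the quadratic form $Q_{\mathcal{G}}(g)=\|g\|_2^2\pm\tfrac12\langle g,\mathcal{K}g\rangle$ on this affine hyperplane, where $(\mathcal{K}g)(x)=\int_{-\sigma}^{\sigma}g(y)\,\mathbf 1[|x-y|\le 1]\,dy$ and, here and below, the upper sign is for ${\rm SO(even)}$ and the lower for ${\rm Sp}$ (equivalently ${\rm SO(odd)}$). A row-sum estimate gives $\|\mathcal{K}\|\le 2$, in fact $<2$ because the row sums drop below $2$ near the endpoints of $[-\sigma,\sigma]$ where the extremal eigenfunctions do not vanish; hence $Q_{\mathcal{G}}$ is positive definite, the constrained problem is strictly convex, its minimizer is unique, and it is characterized by the Lagrange condition $(I\pm\tfrac12\mathcal{K})g=\lambda$, i.e.
\[
g(x)\pm\tfrac12\int_{-\sigma}^{\sigma}g(y)\,\mathbf 1[|x-y|\le 1]\,dy=\lambda,\qquad |x|\le\sigma .
\]
This simultaneously proves the global minimality asserted in Theorem~\ref{thm:mainresult}.

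\textbf{Solving the Fredholm equation.} For $1<\sigma<3/2$ the endpoints $\max(-\sigma,x-1)$ and $\min(\sigma,x+1)$ of the integration range change form exactly at $|x|=\sigma-1$ and $|x|=2-\sigma$, which produces the three pieces in \eqref{SOEvenExplicitOptimalFunction}--\eqref{SOOdd/SpExplicitOptimalFunction}. Folding $\mathcal{K}g$ onto $[0,\sigma]$ using evenness and differentiating the integral equation in each region turns it into a coupled functional--differential system: on $|x|\le\sigma-1$ one gets $g'(x)=\mp\tfrac12\big(g(1+x)-g(1-x)\big)$ and, differentiating once more, $g''=-\tfrac12 g$, so (since $g$ is even and $C^1$ at $0$) $g(x)$ is a multiple of $\cos(|x|/\sqrt 2)$; on $\sigma-1\le|x|\le2-\sigma$ the relation $g'(x)=\pm\tfrac12 g(1-x)$ is self-referential and yields $g''=-\tfrac14 g$, while matching the phase through it forces the middle piece to be a multiple of $\cos\!\big(\tfrac{|x|}{2}-\tfrac{\pi+1}{4}\big)$ for ${\rm SO(even)}$ and of $\cos\!\big(\tfrac{|x|}{2}+\tfrac{\pi-1}{4}\big)$ for ${\rm Sp}$; on $2-\sigma\le|x|\le\sigma$ the argument $x-1$ falls back into the innermost region, so $g'(x)=\pm\tfrac12 g(x-1)=\pm\tfrac12 c_{1,\mathcal{G}}\lambda_{\mathcal{G}}\cos\!\big(\tfrac{x-1}{\sqrt 2}\big)$, which integrates to the $\sin\!\big(\tfrac{x-1}{\sqrt 2}\big)$-plus-constant form. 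Imposing continuity of $g$ at $|x|=\sigma-1$ and $|x|=2-\sigma$, together with the undifferentiated equation evaluated at $x=0$ and at $x=\sigma$, gives linear equations that determine $c_{1,\mathcal{G}}$, $c_{3,\mathcal{G}}$ and $\lambda$; fixing the overall scale $\lambda_{\mathcal{G}}$ by the chosen normalization then yields \eqref{SOEvenExplicitOptimalFunction}--\eqref{SOOdd/SpExplicitOptimalFunction} with their explicit coefficients.

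\textbf{The rank bounds, and the main difficulty.} Substituting the optimal $g_{\mathcal{G},s}$ into $\int\hphi(y)\,\widehat{W}_{1,\mathcal{G}}(y)\,dy\big/\phi(0)$ and carrying out the (routine but lengthy) trigonometric integrals yields Corollary~\ref{cor:averankmainresult}; as internal checks, the ${\rm SO(odd)}$ entry exceeds the ${\rm Sp}$ entry by exactly $1$ and the ${\rm O}$ entry is $\tfrac1{2\sigma}+\tfrac12$ from the first step. I expect the main obstacle to be the third step: correctly setting up the coupled functional--differential system with its reflected arguments $x\mapsto 1\pm x$, isolating the three regimes (which exist precisely for $2<s<3$), and pinning down every constant without arithmetic slips; a secondary technical point is the Paley--Wiener reduction and the verification that the $L^2$-optimum is attained and is approximable by genuine admissible Schwartz test functions.
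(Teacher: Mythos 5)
Your sketch reconstructs the paper's full argument for Theorem~\ref{thm:mainresult} together with the corollary: factor $\hphi = g\ast\check g$ via Paley--Wiener, reduce to minimizing $\langle(I+K_{\mathcal{G}})g,g\rangle/|\langle g,1\rangle|^2$, derive the Euler--Lagrange integral equation $(I+K_{\mathcal{G}})g=\text{const}$, localize and differentiate to a delay-ODE system on the three sub-intervals $J_1, J_2, J_3$, solve, and match constants. The route and the answers match. Your observation that $R_{{\rm SO(odd)}}(g)=R_{{\rm Sp}}(g)+1$ (since $\widehat{W}_{1,{\rm SO(odd)}}-\widehat{W}_{1,{\rm Sp}}\equiv 1$ contributes exactly $\phi(0)/\phi(0)$), so the two groups share the same optimizer and the corresponding entries of \eqref{AvgRankBoundsForExtendedSupport} share a denominator and differ by exactly $1$, is a genuine and useful shortcut; the paper arrives at the same conclusion only after computing the two scaling factors $\lambda_{\mathcal{G}}$ separately.

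Two spots are thinner than they should be. First, differentiating the Euler--Lagrange integral equation presupposes that the minimizer $g$ is differentiable, and your sketch moves directly from the integral equation to the functional-differential system without justifying this; the paper spends all of Section~3 here (Lipschitz via a maximum-modulus estimate on the convolution, then Rademacher's theorem, then a bootstrap to $C^\infty$ off the break points). The gap is bridgeable precisely by iterating $(I+K_{\mathcal{G}})g=\text{const}$, but it must be stated. Second, your positivity argument -- ``$\|\mathcal{K}\|_{L^2}\le 2$, in fact $<2$, because the row sums drop below $2$ near the endpoints where the extremal eigenfunctions do not vanish'' -- is a Perron--Frobenius heuristic rather than a proof, and note the row sum equals $2$ on the \emph{entire} middle interval $|x|\le\sigma-1$, so the slack is confined to a short region near $\pm\sigma$; making $\|\mathcal{K}\|<2$ rigorous (equivalently, strict positivity of $I+K_{\mathcal{G}}$) takes more than this one-liner, and the paper simply defers to the arguments of \cite{ILS}. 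Finally, once the optimal $g$ is in hand, the bound is cleanest via ${\rm inf}(\mathcal{G},\sigma)=1/\int_{-\sigma}^{\sigma}g$, which follows immediately from $(I+K_{\mathcal{G}})g=1$ since then $\langle(I+K_{\mathcal{G}})g,g\rangle=\langle 1,g\rangle$; this replaces the ``routine but lengthy trigonometric integrals'' you anticipate with a single integration of $g$.
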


\begin{remark}
We only list $g$ and not the optimal test functions or their Fourier transforms above, as we do not need either function for the computation of the infimum. \cite{ILS} show that given the $g$ associated to the optimal function, the infimum is given by
\begin{equation}\label{infcomputationeqn}
  {\rm inf}(\mathcal{G},\sigma)\ =\ \frac{1}{\int_{-\sigma}^{\sigma} g(x) dx},
\end{equation}
where the integral above exists and is nonzero by \eqref{detailsofhatfactorization} and  \eqref{CriterionForOptimal}, both established later.
\end{remark}

A natural choice for a test function is the Fourier pair
\begin{equation}
\phi(x) = \left( \frac{\sin(2 \sigma \pi x)}{2 \sigma \pi x} \right)^{2}, \quad \quad \widehat{\phi}(y) = \frac{1}{2\sigma}\left( 1 - \frac{|y|}{2\sigma} \right) \ \quad \textrm{if} \ |y| < 2 \sigma;
\label{NaiveFourierPair}
\end{equation}
this is the function used for the initial computation of average rank bounds in \cite{ILS} and are optimal for $\sigma = 1$. For the groups ${\rm SO(even)}, {\rm Sp}, {\rm SO(odd)}$, and for $1 < \sigma < 1.5$ the functions we find provide a significant improvement for the upper bounds on average rank over the pair \eqref{NaiveFourierPair}. We illustrate the improvement in Figure \ref{fig:comparisons}, which is much easier to process than \eqref{AvgRankBoundsForExtendedSupport}.

\begin{figure}[h]
\begin{center}
\scalebox{.5545}{\includegraphics{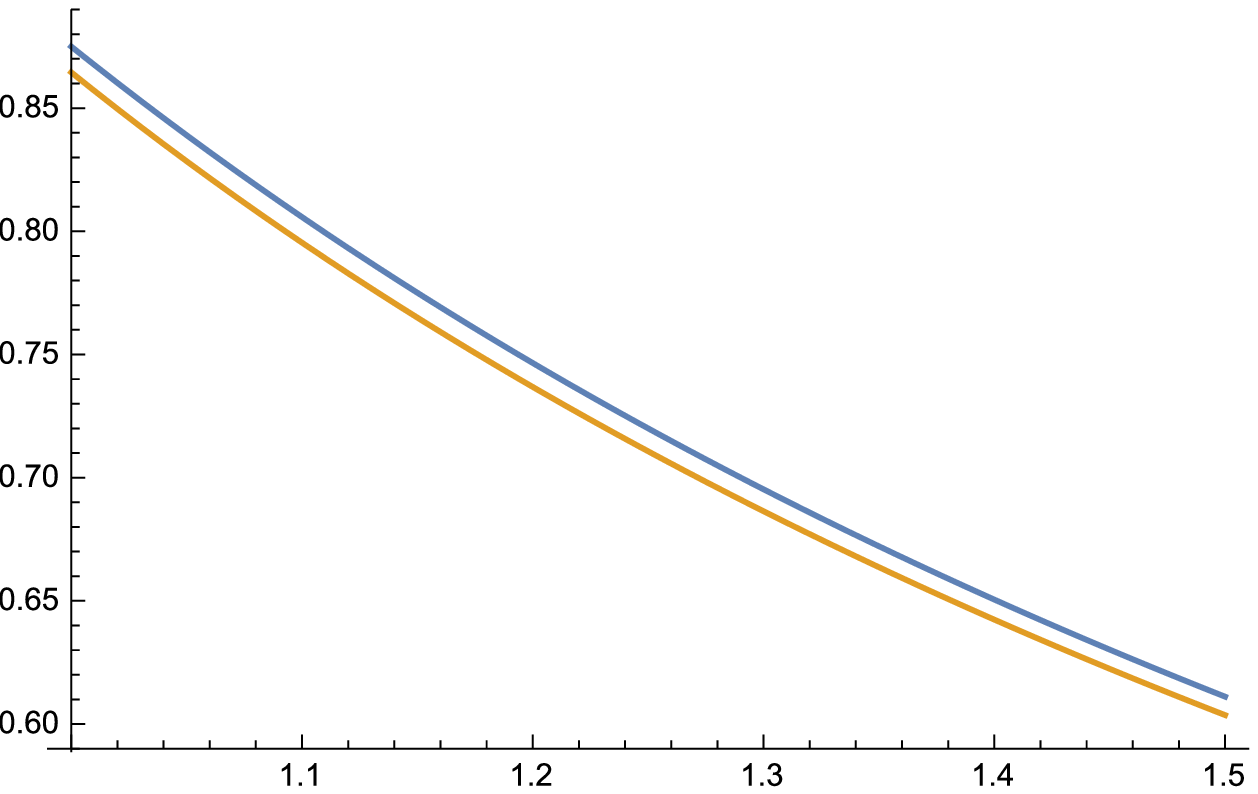}} \ \ \scalebox{.5545}{\includegraphics{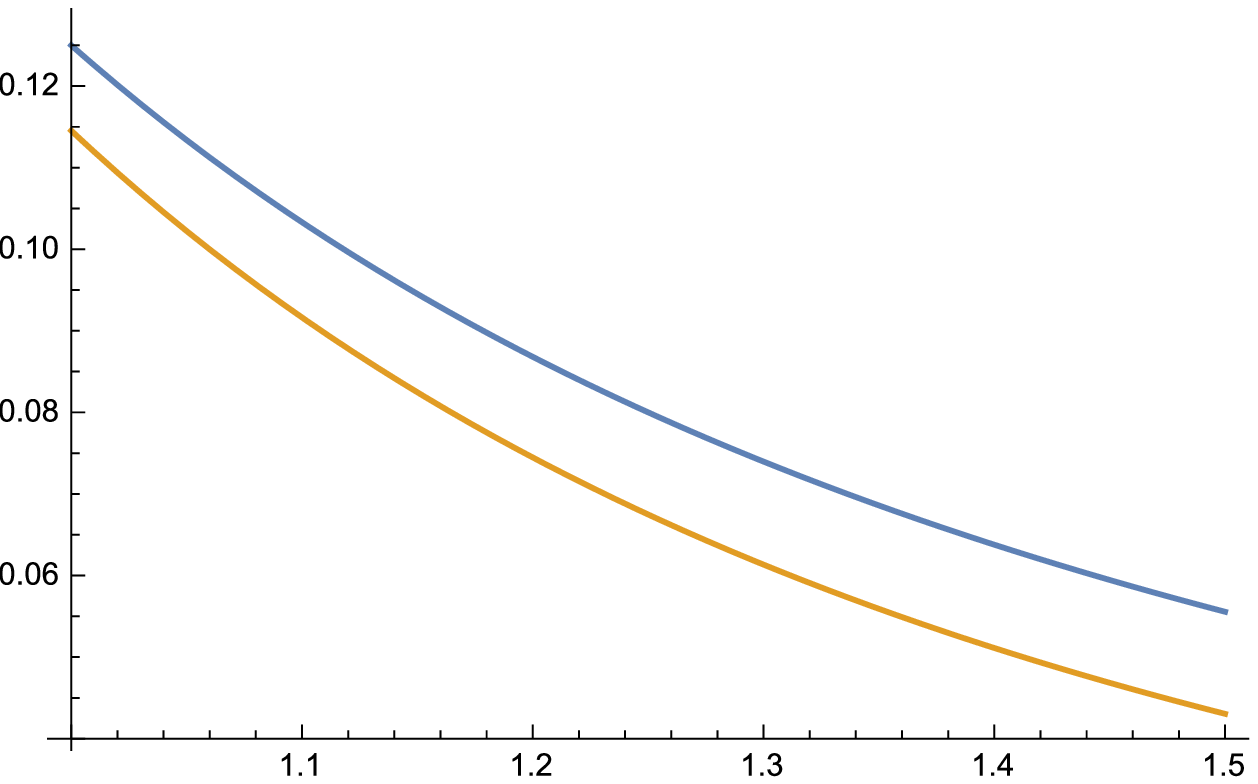}} \ \ \scalebox{.5545}{\includegraphics{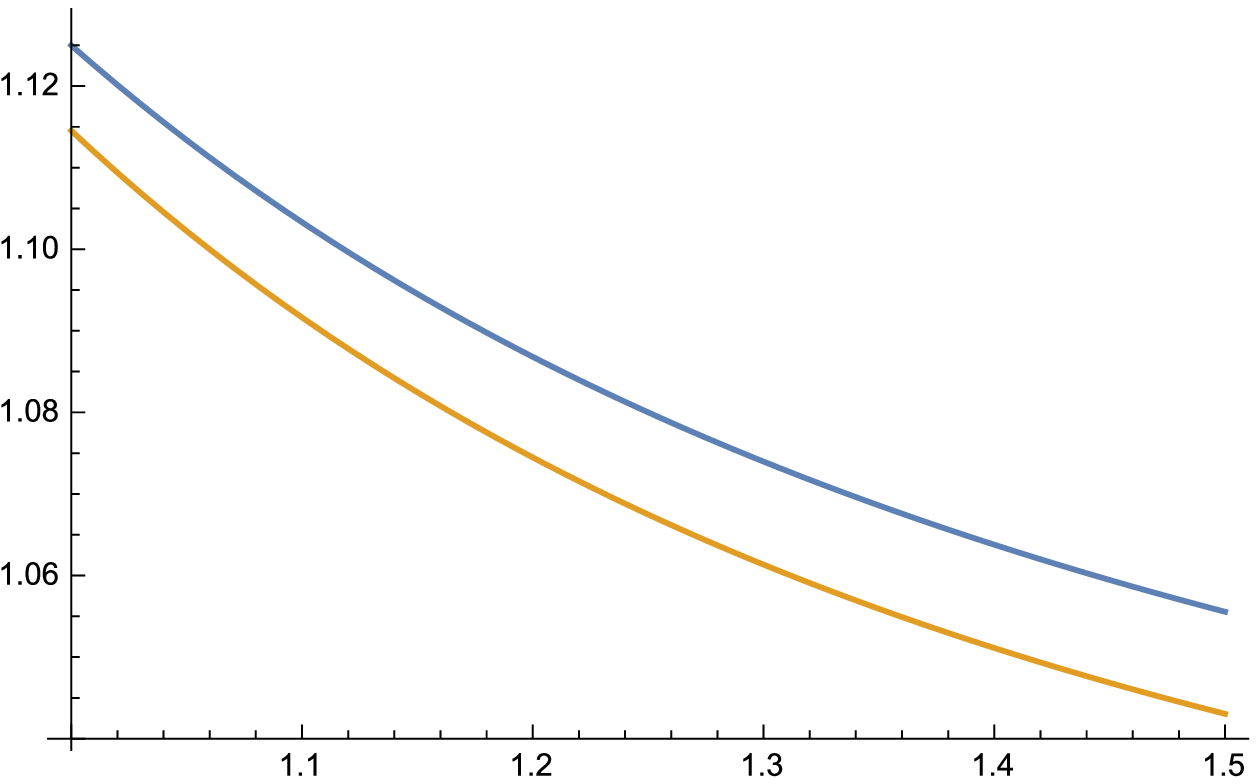}}
\caption{\label{fig:comparisons} Comparison of upper bounds. The larger bound is from using the sub-optimal naive guess \eqref{NaiveFourierPair}, the lower is from using our results from \eqref{AvgRankBoundsForExtendedSupport}. Left: $\mathcal{G}$ = SO(even). Middle: $\mathcal{G} = {\rm Sp}$. Right: $\mathcal{G}$ = SO(odd).}
\end{center}
\end{figure}

%%%%%%%%%%%%%%%%%%%% ADD LATER!!!! <---------------------------------------
%%%%%%%%%%%%%%%%%%%% ADD LATER!!!! <---------------------------------------
%%%%%%%%%%%%%%%%%%%% ADD LATER!!!! <---------------------------------------
%%%%%%%%%%%%%%%%%%%% ADD LATER!!!! <---------------------------------------

%For comparison purposes, we display the three optimal $\phi$ for the three groups (SO(even), Sp, and SO(odd)) for $\sigma = $, as well as the natural choice from \eqref{NaiveFourierPair}.

%\begin{figure}[h]
%\begin{center}
%\scalebox{.8}{\includegraphics{ComparisonALL.eps}}
%\caption{\label{fig:comparisonfourphiplots} Comparison of the three optimal $\phi$ for SO(even), Sp and SO(odd), as well as the natural  choice %from \eqref{NaiveFourierPair}, when $\sigma = $.}
%\end{center}
%\end{figure}

%%%%%%%%%%%%%%%%%%%% ADD LATER!!!! <---------------------------------------
%%%%%%%%%%%%%%%%%%%% ADD LATER!!!! <---------------------------------------
%%%%%%%%%%%%%%%%%%%% ADD LATER!!!! <---------------------------------------

%%%%%%%%%%%%%%%%%%%%%%%%%%%%%%%%%%%%%%%%%%%%%
%%%%%%%%%%%%%%%%%%%%%%%%%%%%%%%%%%%%%%%%%%%%%
\subsection{Sketch of Proof}

The first step in our proof is to note that it follows from the Paley-Wiener theorem and Ahiezer's theorem that the admissible functions $\phi$, with ${\rm supp}(\widehat{\phi}) \subset [-2\sigma,2\sigma]$ satisfy
\begin{equation}\label{optimalphihatfactors}
\widehat{\phi}(y)\ =\ (g \ast \check{g})(y),
\end{equation}
where
\begin{equation} \label{detailsofhatfactorization}
\check{g}(y) = \overline{g}(-y) \quad \quad {\rm supp}(g) \subset [-\sigma,\sigma], \ g \in L^{2}[-\sigma,\sigma];
\end{equation}
see Appendix A of \cite{ILS}. We will sometimes refer to an ``optimal $g$''. By this, we mean the $g$ that satisfies \eqref{optimalphihatfactors} and \eqref{detailsofhatfactorization} for the optimal $\widehat{\phi}$ at a fixed level of support.

%\begin{thm}\label{ExplicitOptimalFunctionsTheorem} For $1 < \sigma < 1.5$, the optimal $g$ are
%  \begin{equation}\label{SOEvenExplicitOptimalFunction}
%    g_{{\rm SO(even)}} \ = \  \lambda_{{\rm SO(even)}}
%    \begin{cases}
%      c_{1,\mathcal{G}} \cos\left( \frac{|x|}{\sqrt{2}} \right) \ &|x| \le \sigma - 1 \\
%      \cos\left( \frac{|x|}{2} - \frac{(\pi + 1)}{4} \right) \ &\sigma - 1 \le |x| \le 2 - \sigma\\
%      \frac{c_{1,\mathcal{G}}}{\sqrt{2}} \sin\left( \frac{x-1}{\sqrt{2}} \right)  + c_{3,\mathcal{G}} \ &2 - \sigma < |x| \le \sigma	
%    \end{cases}
%      \end{equation}
%  for $\mathcal{G} \ = \  {\rm SO(even)}$ and
%  \begin{equation}\label{SOOdd/SpExplicitOptimalFunction}
%    g_{\mathcal{G}} \ = \  \lambda_{\mathcal{G}}
%    \begin{cases}
%      c_{1,\mathcal{G}} \cos\left( \frac{|x|}{\sqrt{2}} \right) \ &|x| \le \sigma - 1 \\
%      \cos\left( \frac{|x|}{2} + \frac{(\pi - 1)}{4} \right) \ &\sigma - 1 \le |x| \le 2 - \sigma\\
%      \frac{-c_{1,\mathcal{G}}}{\sqrt{2}} \sin\left( \frac{x-1}{\sqrt{2}} \right) + c_{3,\mathcal{G}} \ &2 - \sigma < |x| \le \sigma	
%    \end{cases}
%  \end{equation}
%  for $\mathcal{G} =  {\rm SO(odd)} or {\rm Sp}$. Here, the $c_{i,\mathcal{G}}$ and $\lambda_{\mathcal{G}}$ are easily computable.
%\end{thm}
%

The broad strategy of the proof of Theorem \ref{thm:mainresult} is to use an operator equation from \cite{ILS} to show (non-constructively) that for all $\sigma \in \R^{+}$, there exists a unique optimal test function with ${\rm supp}(\widehat{\phi}) \subseteq [-2\sigma,2\sigma]$ that minimizes the functional
\begin{equation}\label{OptimalityCriterion}
\frac{\int_{-\infty}^{\infty}\phi(x)W_{\mathcal{G}}(x) dx}{\phi(0)}.
\end{equation}
We then find a collection of necessary conditions that leave us with precisely one choice for $\phi$.

More explicitly, our argument proceeds as follows.

\begin{enumerate}

  \item \label{firstgoal} We show that certain optimality criterion on $\widehat{\phi}$ presented in \cite{ILS} holds for all $\sigma \in \R^{+}$ (here ${\rm supp}(\widehat{\phi}) \subset [-2\sigma,2\sigma])$.

  \item \label{secondgoal} We show that $g$ is smooth almost everywhere, where $g \ast \check{g} = \widehat{\phi}$ and ${\rm supp}(g) \subseteq
      (-\sigma,\sigma)$.

  \item \label{thirdgoal} Our kernels give us a series of location-specific integral equations. Using the previous smoothness result, we convert those to a
      system of location-specific delay differential equations.

  \item \label{fourthgoal} We solve this system to find an $n$-parameter family in which our solution lives. To find this solution, we incorporate symmetries of       $g$ -- namely that it is even.

  \item \label{fifthgoal} Incorporating more necessary conditions on $g$, we reduce the family to a single candidate function -- by our existence result this is our $g$, from which we obtain our optimal test function $\phi$.

\end{enumerate}

From the list above, we will accomplish goal \ref{firstgoal} in \S\ref{firstgoalsec}, goal \ref{secondgoal} in \S\ref{secondgoalsec}, goal \ref{thirdgoal} in \S\ref{thirdgoalsec}, goal \ref{fourthgoal} in \S\ref{fourthgoalsec}, and goal \ref{fifthgoal} in \S\ref{fifthgoalsec}. The proof of the optimal functions for $\mathcal{G} = {\rm O}$ is significantly easier than the proofs for the other functions. We include a brief proof of this fact at the end of \S\ref{secondgoalsec}. Finally, we conclude with some remarks about how these results are used in number theory, and discuss ongoing and future research.

%%%%%%%%%%%%%%%%%%%%%%%%%%%%%%%%%%%%%%%%%%%%%%%%%%%%%%%%%%%%%%%%%%%%%%%%%%%%%%%%%%%%%%%%%%%%%%%%%%%%%%%%%%%%%%%%%%%%%%%%%%%%
%%%%%%%%%%%%%%%%%%%%%%%%%%%%%%%%%%%%%%%%%%%%%%%%%%%%%%%%%%%%%%%%%%%%%%%%%%%%%%%%%%%%%%%%%%%%%%%%%%%%%%%%%%%%%%%%%%%%%%%%%%%%
%%%%%%%%%%%%%%%%%%%%%%%%%%%%%%%%%%%%%%%%%%%%%%%%%%%%%%%%%%%%%%%%%%%%%%%%%%%%%%%%%%%%%%%%%%%%%%%%%%%%%%%%%%%%%%%%%%%%%%%%%%%%
\section{Extension of the Conditions of \cite{ILS}}\label{firstgoalsec}

Our first step is to state and extend an optimality criterion on $g_{\mathcal{G}}$, analogous to that in Appendix A of \cite{ILS} (we will state it in \eqref{CriterionForOptimal}). Following their arguments, we seek to minimize the functional
\begin{equation}
  R(g) \ := \  \frac{\innerproduct{(I + K_{\mathcal{G}}) g,g}}{|\innerproduct{g,1}|^{2}},
  \label{FunctionalOnTransformSide}
\end{equation}
where $I$ is the identity operator,
\begin{equation}
  K_{\mathcal{G},\sigma}g(x) \ = \  \int_{-\sigma}^{\sigma} m_{\mathcal{G}}(x - y)g(y) dy,
  \label{FormOfK}
\end{equation}
and
\begin{align}
  m({\rm SO(even)})(\xi) &\ = \  \frac{1}{2}\textrm{I}_{[-1,1]}(\xi) \notag \\
  m({\rm SO(odd)})(\xi) &\ = \  1 - \frac{1}{2}\textrm{I}_{[-1,1]}(\xi) \notag \\
  m({\rm Sp}) (\xi) &\ = \  -\frac{1}{2}\textrm{I}_{[-1,1]}(\xi) \notag \\
  m({\rm O})(\xi) &\ = \  \frac{1}{2}
  \label{GroupKernels}
\end{align}
where ${\rm I}_{[-1,1]}$ is the indicator function for the interval $[-1,1]$.

\begin{lem}
  The operator $K_{\mathcal{G},\sigma}$ is compact for all $\sigma \in \R^{+}$, and all choices of $\mathcal{G}$.
  \label{KCompact}
\end{lem}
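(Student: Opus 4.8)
The plan is to show that each $K_{\mathcal{G},\sigma}$ is an integral operator on $L^2[-\sigma,\sigma]$ with a kernel that is square-integrable on $[-\sigma,\sigma]^2$, and then invoke the standard fact that Hilbert--Schmidt operators are compact. Concretely, writing $K_{\mathcal{G},\sigma}g(x) = \int_{-\sigma}^{\sigma} k_{\mathcal{G}}(x,y)\, g(y)\, dy$ with $k_{\mathcal{G}}(x,y) = m_{\mathcal{G}}(x-y)$, it suffices to check that
\begin{equation*}
\int_{-\sigma}^{\sigma}\int_{-\sigma}^{\sigma} |m_{\mathcal{G}}(x-y)|^2 \, dx\, dy \ < \ \infty .
\end{equation*}
For each of the four choices in \eqref{GroupKernels}, $m_{\mathcal{G}}$ is a bounded function (it is a constant, an indicator, or a constant minus an indicator), so $|m_{\mathcal{G}}(\xi)| \le 1$ for all $\xi$; hence the double integral is at most $(2\sigma)^2 < \infty$. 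This gives the Hilbert--Schmidt bound $\|K_{\mathcal{G},\sigma}\|_{\mathrm{HS}} \le 2\sigma$, and compactness follows.

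The key steps, in order, are: (i) recall that $K_{\mathcal{G},\sigma}$ acts on the Hilbert space $L^2[-\sigma,\sigma]$ and identify its integral kernel as the restriction of $m_{\mathcal{G}}(x-y)$ to the square $[-\sigma,\sigma]^2$; (ii) observe from \eqref{GroupKernels} that $m_{\mathcal{G}}$ is measurable and uniformly bounded by $1$ in each of the four cases; (iii) conclude $k_{\mathcal{G}} \in L^2([-\sigma,\sigma]^2)$ by the crude bound above; (iv) cite the theorem that an integral operator with an $L^2$ kernel is Hilbert--Schmidt, hence compact. One could alternatively give a more hands-on argument: approximate $m_{\mathcal{G}}$ in $L^2$ by continuous (or smooth) functions $m^{(n)}$, note that the corresponding operators $K^{(n)}$ have continuous kernels on a compact square and are therefore compact (e.g.\ by Arzel\`a--Ascoli applied to the image of the unit ball), and then observe $\|K_{\mathcal{G},\sigma} - K^{(n)}\| \le \|K_{\mathcal{G},\sigma} - K^{(n)}\|_{\mathrm{HS}} \to 0$, so $K_{\mathcal{G},\sigma}$ is a norm-limit of compact operators and thus compact. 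Either route is routine.

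There is essentially no main obstacle here: the lemma is a soft functional-analytic fact, and the only thing to be careful about is bookkeeping — making sure the operator is viewed on the right space $L^2[-\sigma,\sigma]$ (so that the domain of integration in \eqref{FormOfK} matches the domain of the functions), and that the singular/discontinuous kernel $m({\rm SO(even)})$, $m({\rm SO(odd)})$, $m({\rm Sp})$ (which involve $\mathrm{I}_{[-1,1]}$) still qualifies, which it does since boundedness plus measurability already suffices for the Hilbert--Schmidt criterion. If anything requires a word, it is simply noting that the same proof is uniform in $\sigma \in \mathbb{R}^+$ and covers all four groups simultaneously, since the bound $|m_{\mathcal{G}}| \le 1$ is uniform.
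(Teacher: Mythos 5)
Your proposal is correct and takes essentially the same route as the paper: both observe that $m_{\mathcal{G}}$ restricted to the relevant finite interval lies in $L^2$, so that $K_{\mathcal{G},\sigma}$ is an integral operator with square-integrable kernel, hence compact. The paper compresses this to one sentence and asserts the operator is ``trace class and therefore compact''; your version is actually more careful, since an $L^2$ kernel gives Hilbert--Schmidt (which already suffices for compactness), whereas trace class is a stronger property that does not follow merely from the kernel being in $L^2$. Spelling out the explicit bound $\|K_{\mathcal{G},\sigma}\|_{\mathrm{HS}} \le 2\sigma$ and noting the uniformity in $\sigma$ and in $\mathcal{G}$ is a small improvement in precision over the paper's phrasing; the alternative approximation argument you sketch is also fine but not needed.
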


\begin{proof} As the functions in \eqref{GroupKernels} are all clearly in $L^{2}([-\sigma,\sigma])$, they are trace class and therefore compact. \end{proof}

It follows that the operator $(I + K_{\mathcal{G},\sigma})$ satisfies the Fredholm alternative for all $\mathcal{G}$ and all $\sigma \in \R^{+}$. Applying the arguments from \cite{ILS} shows that for all $\sigma$ the operator $I + K_{\mathcal{G}, \sigma}$ is still positive definite. Thus there is some $g$ such that
\begin{equation}
  (I + K_{\mathcal{G},\sigma})(g) \ = \  1.
  \label{CriterionForOptimal}
\end{equation}
Again, following the arguments of \cite{ILS}, one can show that this $g$ indeed minimizes \eqref{FunctionalOnTransformSide}. This completes the first step.

We are now ready to find the optimal functions for $\mathcal{G} = {\rm O}$.

\begin{lemma}\label{optimalorthogfunctionslemma}
For $\mathcal{G} = {\rm O}$, and for any $\sigma \in \R^{+}$, the optimal test function for the minimization of \eqref{OptimalityCriterion} is
\begin{equation}\label{optimalorthogfunctionseqn}
  \phi(x) \ = \ \left( \frac{\sin (2 \sigma \pi x)}{(1 + \sigma) \pi x} \right)^{2}
\end{equation}
and the associated upper bound on average rank is
\begin{equation}
  \frac{1}{2\sigma} + \frac{1}{2}
  \label{rankboundorthog}
\end{equation}
\end{lemma}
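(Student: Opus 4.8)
The plan is to solve the operator equation \eqref{CriterionForOptimal} explicitly in the case $\mathcal{G} = {\rm O}$, where the kernel $m({\rm O})(\xi) \equiv \tfrac12$ is constant; this is exactly the feature that makes the orthogonal case far easier than SO(even), SO(odd), or Sp. First I would observe that for $\mathcal{G} = {\rm O}$ the operator $K_{{\rm O},\sigma}$ acts on $g \in L^2[-\sigma,\sigma]$ by
\begin{equation}
K_{{\rm O},\sigma}g(x) \ = \ \frac{1}{2}\int_{-\sigma}^{\sigma} g(y)\, dy \ = \ \frac{1}{2}\langle g, 1\rangle,
\end{equation}
a rank-one operator whose output is the \emph{constant} function with value $\tfrac12\langle g,1\rangle$. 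Hence if $(I + K_{{\rm O},\sigma})g = 1$, then $g = 1 - \tfrac12\langle g,1\rangle$ is itself constant, say $g \equiv c$. Plugging back in gives $c + \tfrac12 (2\sigma c) = 1$, i.e. $c(1+\sigma) = 1$, so $g_{{\rm O},\sigma}(x) = \tfrac{1}{1+\sigma}\mathrm{I}_{[-\sigma,\sigma]}(x)$, which matches \eqref{OrthogOptimalFunction} (recall $\sigma = s/2$). By the discussion following \eqref{CriterionForOptimal}, this $g$ minimizes the functional \eqref{FunctionalOnTransformSide}, hence yields the optimal $\phi$ via $\widehat{\phi} = g \ast \check g$.

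Next I would compute $\widehat{\phi}$ and then invert the Fourier transform. Since $g$ is a real, even multiple of an indicator, $\check g = g$ and $\widehat{\phi} = g \ast g = \tfrac{1}{(1+\sigma)^2}\,\bigl(\mathrm{I}_{[-\sigma,\sigma]} \ast \mathrm{I}_{[-\sigma,\sigma]}\bigr)$; the self-convolution of $\mathrm{I}_{[-\sigma,\sigma]}$ is the triangle function $(2\sigma - |y|)$ supported on $[-2\sigma,2\sigma]$. Thus $\widehat\phi(y) = \tfrac{1}{(1+\sigma)^2}(2\sigma - |y|)$ for $|y|\le 2\sigma$ and $0$ otherwise. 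Taking the inverse Fourier transform of this triangle gives the Fejér-type kernel
\begin{equation}
\phi(x) \ = \ \frac{1}{(1+\sigma)^2}\left( \frac{\sin(2\sigma\pi x)}{\pi x}\right)^{2} \ = \ \left( \frac{\sin(2\sigma\pi x)}{(1+\sigma)\pi x}\right)^{2},
\end{equation}
which is exactly \eqref{optimalorthogfunctionseqn}. Finally, for the rank bound I would use the remark's formula \eqref{infcomputationeqn}: the infimum of \eqref{OptimalityCriterion} equals $1/\int_{-\sigma}^\sigma g(x)\,dx = 1/\bigl(\tfrac{2\sigma}{1+\sigma}\bigr) = \tfrac{1+\sigma}{2\sigma} = \tfrac{1}{2\sigma} + \tfrac12$, giving \eqref{rankboundorthog}. (Alternatively one checks directly that $\widehat\phi(0)/\phi(0) = \tfrac{2\sigma/(1+\sigma)^2}{4\sigma^2/(1+\sigma)^2} \cdot \tfrac{1}{1}$, using $\phi(0) = (2\sigma/(1+\sigma))^2$ and $\int \widehat\phi = \widehat\phi$-value... ; cleanest is to note $\int_{-\infty}^\infty \hphi(y)\widehat W_{1,{\rm O}}(y)\,dy = \hphi(0) + \tfrac12\int \hphi = \tfrac{2\sigma}{(1+\sigma)^2} + \tfrac12\cdot\tfrac{(2\sigma)^2}{(1+\sigma)^2}$ and divide by $\phi(0) = \tfrac{4\sigma^2}{(1+\sigma)^2}$.)

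There is essentially no serious obstacle here; the only things to be careful about are bookkeeping with the two conventions ($\sigma$ versus $2\sigma$ support, noted in the quoted remark), the normalization of the Fourier transform so that $\mathrm{I}_{[-\sigma,\sigma]}\ast\mathrm{I}_{[-\sigma,\sigma]}$ transforms correctly to $(\sin(2\sigma\pi x)/\pi x)^2$, and citing the positivity/minimality claims from \cite{ILS} (already imported in \S\ref{firstgoalsec}) rather than reproving them. The substantive content is simply the observation that a constant kernel forces the optimal $g$ to be constant, collapsing the Fredholm problem to a one-variable linear equation.
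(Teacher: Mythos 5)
Your proposal is correct and takes essentially the same approach as the paper: solve $(I+K_{{\rm O},\sigma})g=1$ by recognizing that the constant kernel $m({\rm O})\equiv\tfrac12$ forces $g$ to be the constant $\tfrac{1}{1+\sigma}$ on $[-\sigma,\sigma]$, then recover $\phi$ via $\widehat\phi=g\ast g$ and the rank bound via the infimum formula. The only cosmetic difference is that the paper simply \emph{tries} constant functions and verifies, whereas you observe that $K_{{\rm O},\sigma}$ is rank one with constant range and hence \emph{deduce} that any solution must be constant — a slightly cleaner justification of the same computation.
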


\begin{proof}[Proof -- Optimal functions for $\mathcal{G} = {\rm O}$]
Using the criterion \eqref{CriterionForOptimal}, we can find the optimal functions for $\mathcal{G} = {\rm O}$ for all $\sigma \in \R^{+}$. Trying constant functions, with $K_{\mathcal{G}} = \frac{1}{2}{\rm I}_{[-\sigma,\sigma]}$, we see that
\begin{equation}\label{optimalorthogfunctions}
  g(x) \ = \
  \begin{cases}
    \frac{1}{1 + \sigma} \ &|x|\le \sigma \\
    0 \ &|x| > \sigma
  \end{cases}
\end{equation}
satisfies \eqref{CriterionForOptimal}. By \eqref{optimalphihatfactors} and \eqref{detailsofhatfactorization}, we know $\widehat{\phi} \ = \ g \ast g$. Thus $\phi \ = \ (\mathcal{F}^{-1}(g))^{2}$, where $\mathcal{F}^{-1}$ denotes Fourier inversion. Two quick calculations yield \eqref{optimalorthogfunctionseqn} and \eqref{rankboundorthog}.
\end{proof}

%%%%%%%%%%%%%%%%%%%%%%%%%%%%%%%%%%%%%%%%%%%%%%%%%%%%%%%%%%%%%%%%%%%%%%%%%%%%%%%%%%%%%%%%%%%%%%%%%%%%%%%%%%%%%%%%%%%%%%%%%%%%
%%%%%%%%%%%%%%%%%%%%%%%%%%%%%%%%%%%%%%%%%%%%%%%%%%%%%%%%%%%%%%%%%%%%%%%%%%%%%%%%%%%%%%%%%%%%%%%%%%%%%%%%%%%%%%%%%%%%%%%%%%%%
%%%%%%%%%%%%%%%%%%%%%%%%%%%%%%%%%%%%%%%%%%%%%%%%%%%%%%%%%%%%%%%%%%%%%%%%%%%%%%%%%%%%%%%%%%%%%%%%%%%%%%%%%%%%%%%%%%%%%%%%%%%%
\section{Smoothness Almost Everywhere}\label{secondgoalsec}

We now show that for an optimal $\phi$ such that $\widehat{\phi} = g \ast \check{g}$, $g$ must be Lipschitz continuous. Then we show that such a function is differentiable almost everywhere, using a theorem of Rademacher.

First, we show that $g$ is bounded.

\begin{lem}\label{BoundedBeforeLip} Let $\widehat{\phi}$ Fourier transform of the optimal function, supported in $[-2\sigma,2\sigma]$, then $g$ (in the sense of \eqref{optimalphihatfactors} and \eqref{detailsofhatfactorization}) is bounded.
\end{lem}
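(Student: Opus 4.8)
The plan is to exploit the operator equation $(I + K_{\mathcal{G},\sigma})g = 1$ from \eqref{CriterionForOptimal}, rearranging it as $g = 1 - K_{\mathcal{G},\sigma}g$, and then to bound the right-hand side pointwise. First I would recall that the optimizer $g$ exists and lies in $L^2[-\sigma,\sigma]$ by the Fredholm-alternative argument already established; in particular $g$ is integrable on the bounded interval $[-\sigma,\sigma]$, so $\|g\|_{L^1[-\sigma,\sigma]} < \infty$. Next I would observe that in every case the kernel $m_{\mathcal{G}}$ appearing in \eqref{FormOfK}--\eqref{GroupKernels} is bounded: indeed $|m_{\mathcal{G}}(\xi)| \le 1$ uniformly (it is $\tfrac12 \mathrm{I}_{[-1,1]}$, $1 - \tfrac12\mathrm{I}_{[-1,1]}$, $-\tfrac12\mathrm{I}_{[-1,1]}$, or the constant $\tfrac12$). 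Therefore, for any $x \in [-\sigma,\sigma]$,
\begin{equation}
|K_{\mathcal{G},\sigma}g(x)| \ = \ \left| \int_{-\sigma}^{\sigma} m_{\mathcal{G}}(x-y) g(y)\, dy \right| \ \le \ \sup_{\xi}|m_{\mathcal{G}}(\xi)| \cdot \int_{-\sigma}^{\sigma} |g(y)|\, dy \ \le \ \|g\|_{L^1[-\sigma,\sigma]}.
\end{equation}
Hence $|g(x)| = |1 - K_{\mathcal{G},\sigma}g(x)| \le 1 + \|g\|_{L^1[-\sigma,\sigma]}$ for almost every $x$, which is the desired uniform bound; outside $[-\sigma,\sigma]$ we have $g \equiv 0$ by \eqref{detailsofhatfactorization}.

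One subtlety to handle carefully: the equation \eqref{CriterionForOptimal} holds as an identity in $L^2[-\sigma,\sigma]$, so a priori it only pins down $g$ up to a null set. The cleanest way to phrase the conclusion is that the $L^2$-representative $1 - K_{\mathcal{G},\sigma}g$ is a genuinely bounded function (the convolution of an $L^1$ function against a bounded kernel is bounded), and we simply take this representative as our $g$; since the subsequent sections will upgrade $g$ to Lipschitz continuity, choosing the good representative now costs nothing. I would also note explicitly that $\|g\|_{L^1} \le \sqrt{2\sigma}\,\|g\|_{L^2}$ by Cauchy--Schwarz, so the bound can be made quantitative in terms of $\|g\|_{L^2}$ if one wishes, though for the statement as written a finite bound is all that is needed.

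I do not expect a genuine obstacle here — this is the easy first step toward the regularity theory. The only place requiring a little care is the measure-theoretic bookkeeping just described (passing from the $L^2$ identity to a pointwise statement by selecting the continuous-looking representative $1 - K_{\mathcal{G},\sigma}g$), and making sure the argument is uniform over all four symmetry types $\mathcal{G}$ and all $\sigma \in \mathbb{R}^+$, which it is because the kernels are uniformly bounded and the domain of integration is compact. With boundedness in hand, the next lemma will bootstrap to Lipschitz continuity by estimating $|g(x) - g(x')| = |K_{\mathcal{G},\sigma}g(x) - K_{\mathcal{G},\sigma}g(x')|$ using the now-known bound on $g$ together with the $L^1$-continuity of translation applied to the (piecewise constant) kernels.
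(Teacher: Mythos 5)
Your proof is correct and is essentially the paper's argument: both rearrange $(I + K_{\mathcal{G},\sigma})g = 1$ to $g = 1 - K_{\mathcal{G},\sigma}g$ and then bound $K_{\mathcal{G},\sigma}g$ pointwise by a H\"older-type estimate using the boundedness of the kernel and the integrability of $g$. The only cosmetic difference is that the paper applies Cauchy--Schwarz directly, estimating $|K_{\mathcal{G},\sigma}g(x)| \le \|g\|_{L^2}\,\|m_{\mathcal{G}}(x-\cdot)\|_{L^2}$, while you pair $\|m_{\mathcal{G}}\|_{L^\infty}$ with $\|g\|_{L^1}$ (and note the two are related by Cauchy--Schwarz anyway); your extra remark about choosing the bounded representative of the $L^2$ class is a sensible bit of bookkeeping that the paper leaves implicit.
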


\begin{proof}  We show that
\begin{equation}\label{FirstPartTriangle}
h(x)\ := \ \int_{-\sigma}^{\sigma} m_{\mathcal{G}}(x - y)g(y) dy
\end{equation}
is bounded. We know that $g \in L^2([-\sigma,\sigma])$. By the Cauchy-Schwarz inequality, we have
\begin{equation}\label{CauchySchwarzBounded}
    \left|\int_{-\sigma}^{\sigma} m_{\mathcal{G}}(x - y) g(y) dy\right| \ \le\ \norm{g}_{L^2} \norm{m_{\mathcal{G}}(x - y)}_{L^2}\ <\ \infty,
\end{equation} and thus $h$ is bounded.

By \eqref{CriterionForOptimal}, we know that for the optimal $g$ we have $g + h = 1$. As $h$ is bounded the optimal $g$ must bounded.
\end{proof}

\begin{prop}\label{LipschitzProp} For any $\sigma \in \R^{+}$, the optimal $g$ is Lipschitz continuous.
\end{prop}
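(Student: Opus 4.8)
The plan is to bootstrap from the boundedness of $g$ established in Lemma \ref{BoundedBeforeLip} to Lipschitz continuity, again using the fixed-point equation $g + K_{\mathcal{G},\sigma}g = 1$ from \eqref{CriterionForOptimal}. Since $g = 1 - K_{\mathcal{G},\sigma}g$ on $[-\sigma,\sigma]$, it suffices to show that $x \mapsto h(x) = \int_{-\sigma}^{\sigma} m_{\mathcal{G}}(x-y) g(y)\, dy$ is Lipschitz on $[-\sigma,\sigma]$. First I would record the explicit form of each kernel from \eqref{GroupKernels}: in every case $m_{\mathcal{G}}$ is a constant plus a multiple of $\mathrm{I}_{[-1,1]}$, so $h(x)$ is, up to an additive constant and a bounded scalar multiple, of the form $\int_{\max(-\sigma,\,x-1)}^{\min(\sigma,\,x+1)} g(y)\, dy$. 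For the mixed orthogonal case $m_{\mathcal{O}} \equiv 1/2$, $h$ is literally constant, so that case is trivial; the content is in the other three.

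The key estimate is then: for $x_1, x_2 \in [-\sigma,\sigma]$,
\begin{equation}
|h(x_1) - h(x_2)| \ \le\ C\left| \int_{I(x_1)} g(y)\, dy - \int_{I(x_2)} g(y)\, dy \right|,
\end{equation}
where $I(x) = [x-1,x+1] \cap [-\sigma,\sigma]$. The symmetric difference of $I(x_1)$ and $I(x_2)$ is contained in a union of at most two intervals of total length at most $2|x_1 - x_2|$ (the endpoints of $I(x)$ move at unit speed in $x$, and clipping to $[-\sigma,\sigma]$ only shrinks the symmetric difference). Hence
\begin{equation}
|h(x_1) - h(x_2)| \ \le\ C \, \|g\|_{L^\infty} \cdot 2|x_1 - x_2|,
\end{equation}
using the boundedness of $g$ from Lemma \ref{BoundedBeforeLip}. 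Therefore $h$ is Lipschitz, and so $g = 1 - (K_{\mathcal{G},\sigma}g - h) \cdots$ — more precisely $g = 1 - h$ is Lipschitz with the same constant. One should be slightly careful that $K_{\mathcal{G},\sigma}g$ and $h$ differ only by the constant term of $m_{\mathcal{G}}$ integrated against $g$, which contributes nothing to differences $h(x_1)-h(x_2)$; I would state this cleanly by writing $m_{\mathcal{G}}(\xi) = a_{\mathcal{G}} + b_{\mathcal{G}} \mathrm{I}_{[-1,1]}(\xi)$ and noting only the $b_{\mathcal{G}}$ term matters.

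I expect the main (and really only) obstacle to be purely bookkeeping: correctly tracking the clipping of $[x-1,x+1]$ against $[-\sigma,\sigma]$ in the regime $1 < \sigma < 1.5$, so that one genuinely bounds the measure of the symmetric difference by $2|x_1-x_2|$ uniformly in $x_1,x_2$. There is no analytic subtlety — no compactness or spectral input is needed here, only Lemma \ref{BoundedBeforeLip} and elementary measure estimates — so the proof is short. A remark worth including at the end is that once Lipschitz continuity is known, Rademacher's theorem (as the paper notes) gives differentiability almost everywhere, which is exactly what is needed in \S\ref{thirdgoalsec} to pass from the integral equation \eqref{CriterionForOptimal} to delay differential equations.
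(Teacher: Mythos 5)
Your proof is correct and follows essentially the same route as the paper: both start from $g = 1 - K_{\mathcal{G},\sigma}g$, bound $|g(x_1)-g(x_2)|$ by the $L^\infty$-norm of $g$ (from Lemma \ref{BoundedBeforeLip}) times the measure of the set where the kernel difference $m_{\mathcal{G}}(x_1-\cdot) - m_{\mathcal{G}}(x_2-\cdot)$ is nonzero, and then observe that this set has measure at most $2|x_1-x_2|$ because the kernel is a constant plus a multiple of $\mathrm{I}_{[-1,1]}$. Your splitting $m_{\mathcal{G}} = a_{\mathcal{G}} + b_{\mathcal{G}}\mathrm{I}_{[-1,1]}$ is a slightly cleaner bookkeeping device than the paper's direct estimate of $\int |m_{\mathcal{G}}(x_1-y)-m_{\mathcal{G}}(x_2-y)|\,dy$, but the two arguments are the same in substance.
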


\begin{proof}
  Using \eqref{CriterionForOptimal} and applying the maximum modulus inequality, we see that for $x_1,x_2 \in [-\sigma,\sigma]$,
  \begin{align}\label{LipschitzArg}
    \left| g(x_1) - g(x_2) \right| &\ = \  \int_{-\sigma}^{\sigma}(m_{\mathcal{G}}(x_1 - y) - m_{\mathcal{G}}(x_2 - y))g(y) dy \notag \\
    &\ \le\ \int_{-\sigma}^{\sigma} |m_{\mathcal{G}}(x_1 - y) - m_{\mathcal{G}}(x_2 - y)| |g(y)| dy \notag \\
    &\ \le\ \max_{y \in [-\sigma,\sigma]}|g(y)| \int_{-\sigma}^{\sigma} |m_{\mathcal{G}}(x_1 - y) - m_{\mathcal{G}}(x_2 - y)| dy.
      \end{align}

We now analyze \eqref{LipschitzArg}. Notice that for all choices of $m_{\mathcal{G}}$  in \eqref{GroupKernels}, the integrand is bounded by 1/2. We will examine the region of integration. Without loss of generality we may assume $x_1 \ge x_2$. Note that our integrand vanishes everywhere except from $\max\left\{x_1 - 1, x_2 + 1\right\}$ to $\min\left\{x_1 + 1, \sigma\right\}$ and again from $\max\left\{-\sigma, x_2 - 1\right\}$ to $\min\left\{x_1 - 1, x_2 + 1\right\}$. Thus the region of integration has measure at most $\min\left\{2(x_1 - x_2), 4\right\}$, and the integrand vanishes outside of a set of measure at most $2(x_1 - x_2)$.

We may now revise the inequality in \eqref{LipschitzArg}:
  \begin{align}\label{CompletedLipschitzArg}
    \left| g(x_1) - g(x_2) \right|  &\ \le\ \max_{y \in [-\sigma,\sigma]}|g(y)| \int_{-\sigma}^{\sigma} |m_{\mathcal{G}}(x_1 - y) - m_{\mathcal{G}}(x_2 - y)| dy \notag\\
    &\ \le\ \max_{y \in [-\sigma,\sigma]}|g(y)| (2|x_1 - x_2|) \left( \frac{1}{2} \right) \notag \\
    &\ \le\ \max_{y \in [-\sigma,\sigma]} |g(y)| |x_1 - x_2|,
      \end{align} completing the proof that $g(x)$ is a Lipschitz continuous function.
\end{proof}

We use a theorem of Rademacher to show that our function $g$ is differentiable almost everywhere.

\begin{thm}[Rademacher (see Theorem 3.1.6 of \cite{Fed})]\label{RademacherTheorem}
  Let $\Omega \subset \R^n$ be open. If $f: \Omega \longrightarrow \R$ is Lipschitz continuous, then $f$ is differentiable almost everywhere in $\Omega$.
\end{thm}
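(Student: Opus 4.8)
The plan is to give the classical proof of Rademacher's theorem, reducing the $n$-dimensional statement to the one-variable fact that a Lipschitz function of a single real variable is absolutely continuous, hence differentiable almost everywhere, and then stitching the directional information together by a density argument in the direction variable. Throughout, let $L$ denote the Lipschitz constant of $f$ on $\Omega$.

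First I would fix a unit vector $v$ and look at the directional difference quotients $\frac{f(x+tv)-f(x)}{t}$. These are continuous in $x$, so their upper and lower limits as $t\to 0$ are Borel measurable and the set where $D_v f(x)$ exists is measurable. Restricting $f$ to a line in direction $v$ gives a Lipschitz function of one variable, which is absolutely continuous and therefore differentiable at almost every point of that line; Fubini's theorem then upgrades this to the statement that $D_v f(x)$ exists for almost every $x\in\Omega$. Applying this with $v=e_1,\dots,e_n$ produces the formal gradient $\nabla f(x)=(\partial_1 f(x),\dots,\partial_n f(x))$, defined almost everywhere, with $|\nabla f|\le \sqrt n\, L$.

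The key step is to show that for almost every $x$ the map $v\mapsto D_v f(x)$ coincides with the linear functional $v\mapsto v\cdot\nabla f(x)$, and I would do this weakly. Fix $v$ and $\zeta\in C_c^\infty(\Omega)$. Since the difference quotients are uniformly bounded by $L$ on the support of $\zeta$ for $t$ small, dominated convergence gives $\int \frac{f(x+tv)-f(x)}{t}\zeta(x)\,dx \to \int D_v f(x)\,\zeta(x)\,dx$. On the other hand, a change of variables moves the difference quotient onto $\zeta$, and letting $t\to 0$ (using smoothness of $\zeta$) together with the one-variable integration by parts along lines — valid because $f$ is absolutely continuous on lines and $\partial_i f$ is its a.e.\ derivative there — shows this same limit equals $\int (v\cdot\nabla f(x))\,\zeta(x)\,dx$. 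As $\zeta$ is arbitrary, $D_v f = v\cdot\nabla f$ a.e.\ for each fixed $v$. Choosing a countable dense set $\{v_k\}$ of unit vectors and intersecting the corresponding full-measure sets yields a set $A$ of full measure on which $\nabla f$ exists and $D_{v_k} f(x)=v_k\cdot\nabla f(x)$ for all $k$.

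Finally I would verify that $f$ is differentiable at every $x\in A$. For $x\in A$, a unit vector $v$, and $t\neq 0$, set $Q(v,t):=\frac{f(x+tv)-f(x)}{t}-v\cdot\nabla f(x)$; differentiability of $f$ at $x$ is precisely the statement that $Q(v,t)\to 0$ as $t\to 0$ uniformly in $v$. The Lipschitz bound gives $|Q(v,t)-Q(v',t)|\le (L+|\nabla f(x)|)\,|v-v'|$ uniformly in $t$, so $Q$ is equi-Lipschitz in $v$. Given $\varepsilon>0$, choose finitely many $v_{k_1},\dots,v_{k_m}$ from the dense set that are $\varepsilon$-dense among unit vectors; since $x\in A$ there is $\delta>0$ with $|Q(v_{k_j},t)|<\varepsilon$ for all $j$ whenever $0<|t|<\delta$, and the equi-Lipschitz estimate then bounds $|Q(v,t)|$ by a constant multiple of $\varepsilon$ for every unit $v$ and $|t|<\delta$. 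That is the desired uniform convergence, so $f$ is differentiable at $x$. The main obstacle is the third step — promoting the merely almost-everywhere existing directional derivatives to a genuine linear dependence on the direction — which is why one passes through the weak (distributional) identity, or an equivalent Fubini-plus-integration-by-parts argument, rather than manipulating difference quotients pointwise; the remaining ingredients are the classical one-dimensional differentiation theorem and the compactness of the unit sphere.
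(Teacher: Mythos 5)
The paper does not prove this theorem: it is stated as a black box, attributed to Federer (Theorem 3.1.6 of \cite{Fed}), and used immediately afterwards to deduce Corollary \ref{gisdiffa.e.}. So there is no ``paper's own proof'' to compare against --- only a citation.

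Your argument, on the other hand, is a correct and complete proof of Rademacher's theorem, and it is the standard one (the version usually attributed to Morrey, and the one presented in Evans--Gariepy). The three ingredients are all handled properly: (i) the reduction to lines plus Fubini to get a.e.\ existence of each $D_v f$ and of the formal gradient, with the Borel measurability of the difference-quotient limsup/liminf justifying the Fubini application; (ii) the distributional identity $D_v f = v\cdot\nabla f$ a.e.\ for each fixed $v$, obtained by shifting the difference quotient onto the test function $\zeta$, passing to the limit by dominated convergence (the quotients are bounded by $L$), and integrating by parts along lines using absolute continuity there; and (iii) the upgrade from a countable dense set of directions to genuine differentiability via the equi-Lipschitz bound $|Q(v,t)-Q(v',t)|\le (L+|\nabla f(x)|)|v-v'|$ and compactness of the unit sphere. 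The only stylistic remark worth making is that Federer's own Theorem 3.1.6 is proved by a somewhat different route (approximate differentiability and the structure theory developed earlier in that chapter), whereas yours is the more self-contained elementary argument; for the purposes of this paper either reference suffices, since the theorem is invoked only as a tool.
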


We immediately obtain the following.

\begin{cor} For all $\sigma \in \R^{+}$, the optimal $g$ is differentiable almost everywhere.
%If $\widehat{\phi} = g \ast \check{g}$ with $\widehat{\phi}$ an even Schwartz function whose Fourier transform has bounded support, then $g$ is differentiable almost everywhere.
  \label{gisdiffa.e.}
\end{cor}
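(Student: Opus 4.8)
The plan is to obtain the corollary directly from Proposition \ref{LipschitzProp} together with Rademacher's theorem (Theorem \ref{RademacherTheorem}). By Lemma \ref{BoundedBeforeLip} the optimal $g$ is bounded on $[-\sigma,\sigma]$, so the quantity $L := \max_{y \in [-\sigma,\sigma]} |g(y)|$ appearing in \eqref{CompletedLipschitzArg} is finite, and Proposition \ref{LipschitzProp} then says precisely that $g$ is $L$-Lipschitz on $[-\sigma,\sigma]$. This is the whole substance of the argument; what remains is purely a matter of citing Rademacher correctly.

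The one point needing care is that Theorem \ref{RademacherTheorem} is stated for functions on an \emph{open} subset of $\R^n$, while our $g$ is defined on the closed interval $[-\sigma,\sigma]$. First I would invoke the McShane extension theorem to extend $g$ to a function $\widetilde{g}\colon \R \to \R$ which is still $L$-Lipschitz; explicitly one may take $\widetilde{g}(x) = \inf_{t \in [-\sigma,\sigma]} \bigl( g(t) + L\,|x-t| \bigr)$, which is $L$-Lipschitz on all of $\R$ and agrees with $g$ on $[-\sigma,\sigma]$. Then I would apply Theorem \ref{RademacherTheorem} with $n = 1$ and $\Omega = \R$ (or any open interval containing $[-\sigma,\sigma]$) to conclude that $\widetilde{g}$, and hence $g$ on the interior $(-\sigma,\sigma)$, is differentiable off a set of Lebesgue measure zero. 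Since the two endpoints $\{-\sigma,\sigma\}$ form a null set and $g$ is supported in $[-\sigma,\sigma]$, it follows that the optimal $g$ is differentiable almost everywhere, which is the claim.

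I do not anticipate any real obstacle: the content is entirely carried by the Lipschitz estimate already proved, and the rest is a standard extension argument followed by a citation. (In this one-dimensional setting one could even dispense with Rademacher altogether, since a Lipschitz function on an interval is absolutely continuous and therefore differentiable almost everywhere by classical real-analysis results; I would state Rademacher as the headline tool only to keep the exposition uniform with the rest of the section.)
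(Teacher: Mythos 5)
Your proof is correct and follows essentially the same route as the paper: Proposition \ref{LipschitzProp} combined with Rademacher's theorem (Theorem \ref{RademacherTheorem}). The only cosmetic difference is that the paper handles the open-domain hypothesis by \emph{restricting} $g$ to $(-\sigma,\sigma)$ and noting the endpoints form a null set, whereas you \emph{extend} $g$ to all of $\R$ via McShane; both resolve the technicality equally well.
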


\begin{proof}
  Let $\widetilde{g}(x)$ be $g(x)$ restricted to $(-\sigma,\sigma)$. The result for $\widetilde{g}(x)$ follows from Proposition \ref{LipschitzProp} and Theorem \ref{RademacherTheorem}. Thus $g$ is differentiable almost everywhere in $(-\sigma,\sigma)$, which is almost everywhere in $[-\sigma,\sigma]$.
\end{proof}

Finally, we show that each such $g$ is in fact infinitely differentiable almost everywhere.

\begin{lem}\label{TwiceDiff} The optimal $g$ is infinitely differentiable almost everywhere.
%If $\widehat{\phi} = g \ast \check{g}$ with $\widehat{\phi}$ an even Schwartz function whose Fourier transform has bounded support, then $g$ is smooth almost everywhere.
\end{lem}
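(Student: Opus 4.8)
The plan is to bootstrap from Corollary \ref{gisdiffa.e.} using the operator equation \eqref{CriterionForOptimal}, exploiting the fact that the kernel $m_{\mathcal{G}}$ is piecewise constant. Writing $g = 1 - K_{\mathcal{G},\sigma}g$, we have $g(x) = 1 - \int_{-\sigma}^{\sigma} m_{\mathcal{G}}(x-y)g(y)\,dy$. For each of the kernels in \eqref{GroupKernels}, $m_{\mathcal{G}}(x-y)$ is, as a function of $y$, a constant (either $1/2$, $-1/2$, or $1$) on the interval $[x-1,x+1]\cap[-\sigma,\sigma]$ and a (possibly different) constant off that interval within $[-\sigma,\sigma]$. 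So up to an additive constant and an overall scalar, $K_{\mathcal{G},\sigma}g(x)$ equals $\int_{\max(-\sigma,\,x-1)}^{\min(\sigma,\,x+1)} g(y)\,dy$ — that is, a difference of two antiderivatives of $g$ evaluated at the moving endpoints $x\pm 1$ (clipped to $[-\sigma,\sigma]$).

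The first step is to record that, since $g$ is Lipschitz (Proposition \ref{LipschitzProp}), it is in particular continuous and bounded, hence the function $G(x):=\int_{-\sigma}^{x} g(y)\,dy$ is $C^1$ with $G'=g$. Then from \eqref{CriterionForOptimal} one gets, on the open interval $(-\sigma,\sigma)$ and away from the two points $x = \pm(\sigma-1)$ where an endpoint of integration hits the boundary of $[-\sigma,\sigma]$, an identity of the shape
\begin{equation}
g(x) \ = \ a_{\mathcal{G}} + b_{\mathcal{G}}\bigl(G(\min(\sigma,x+1)) - G(\max(-\sigma,x-1))\bigr)
\end{equation}
for constants $a_{\mathcal{G}}, b_{\mathcal{G}}$ depending only on the group. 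The right-hand side is a composition and sum of $C^1$ functions ($G$ is $C^1$; the clipping maps $x\mapsto \min(\sigma,x+1)$ and $x\mapsto\max(-\sigma,x-1)$ are piecewise linear, hence $C^\infty$ off the breakpoints $\pm(\sigma-1)$), so the right-hand side is $C^1$ off those breakpoints; therefore $g$ is $C^1$ there, hence $G$ is $C^2$ there, hence — feeding this back into the same identity — the right-hand side is $C^2$, so $g$ is $C^2$, and so on. Iterating this bootstrap shows $g \in C^\infty$ on $(-\sigma,\sigma)\setminus\{\pm(\sigma-1)\}$, which is $C^\infty$ off a finite (hence measure-zero) set, establishing the claim.

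The main technical point to get right — and the only real obstacle — is the careful handling of the moving/clipped endpoints: one must verify that on each of the finitely many subintervals of $(-\sigma,\sigma)$ cut out by the breakpoints $x=\pm(\sigma-1)$ (and, for $1<\sigma<3/2$, noting $2-\sigma$ and $\sigma-2$ do not enter at this stage) the integral $\int m_{\mathcal{G}}(x-y)g(y)\,dy$ genuinely reduces to the stated difference of $C^1$ functions with no hidden discontinuity, and that the constants $a_{\mathcal{G}},b_{\mathcal{G}}$ are as claimed for each kernel in \eqref{GroupKernels}. The case $\mathcal{G}={\rm O}$ is trivial since $m_{\mathcal{O}}\equiv 1/2$ makes $g$ literally constant (Lemma \ref{optimalorthogfunctionslemma}); for the other three, the computation is routine bookkeeping once the regions from the proof of Proposition \ref{LipschitzProp} are reused. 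I would also remark that this argument does not merely give smoothness a.e.\ but identifies the precise breakpoints, which is exactly what is needed to set up the location-specific delay differential equations in \S\ref{thirdgoalsec}.
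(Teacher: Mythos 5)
Your bootstrap is essentially the paper's argument: write $g = 1 - K_{\mathcal{G},\sigma}g$, observe that the kernel $m_{\mathcal{G}}$ is piecewise constant so that $K_{\mathcal{G},\sigma}g(x)$ reduces (up to constants) to an antiderivative of $g$ evaluated at the moving endpoints $x\pm 1$ clipped to $[-\sigma,\sigma]$, and then apply the fundamental theorem of calculus and the chain rule to gain one derivative per pass. The paper phrases this as an induction on the number of derivatives, encoding the moving endpoints via the functions $f_i(x,\sigma)$ in \eqref{GrossSimplification}; your version, which names the antiderivative $G$ explicitly and starts the bootstrap from Lipschitz continuity alone, is a slightly cleaner presentation of the same idea and in fact does not need Corollary \ref{gisdiffa.e.} at all, since continuity of $g$ already makes $G$ a $C^1$ function.

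One small slip: your assertion that iteration gives $g\in C^\infty$ on $(-\sigma,\sigma)\setminus\{\pm(\sigma-1)\}$ is too strong, and the accompanying remark that $2-\sigma$ and $\sigma-2$ do not enter is not correct. After the first pass $g$ is $C^1$ away from $\pm(\sigma-1)$, hence $G$ fails to be $C^2$ at $\pm(\sigma-1)$. On the second pass you evaluate $G$ at $x\pm 1$, and $x-1 = 1-\sigma$ occurs at $x = 2-\sigma$ while $x+1 = \sigma-1$ occurs at $x = \sigma-2$, so the points $\pm(2-\sigma)$ are picked up as new breakpoints. A third pass shows the exceptional set then stabilizes at $\{\pm(\sigma-1), \pm(2-\sigma)\}$ for $1<\sigma<3/2$, consistent with the explicit solution in Theorem \ref{thm:mainresult}, whose defining formula genuinely changes at $|x|=2-\sigma$. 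This is still a finite set, so almost-everywhere smoothness and hence the lemma follow just as before; but the exceptional set is not the one you named, and the breakpoint $2-\sigma$ (which demarcates $J_2$ from $J_3$ in \S4) does arise already at this stage.
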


\begin{proof}
We proceed by induction. Our base case, that $g$ is once-differentiable, is established by Corollary \ref{gisdiffa.e.}. For the inductive step, we assume that $g$ is $k$-times differentiable.

Note that for any choice of $m$, we have
\begin{equation}\label{GrossSimplification}
  g(x) \ = \  1 - \left( \alpha_{1,\mathcal{G}} \int_{a}^{b} g(y) dy + \alpha_{2,\mathcal{G}} \int_{c_1}^{f_1(x,\sigma)} g(y) dy + \alpha_{3,\mathcal{G}} \int_{c_2}^{f_2(x,\sigma)} g(y) dy
   \right),
\end{equation}
where the $\alpha_i$ are scalars and $f_i(x,\sigma)$ is either a constant or a smooth function of $x$. In particular, for $\sigma > 1$ at least one of the $f_i$ is a smooth function of $x$. We know that $g$ is continuous. By the fundamental theorem of calculus and the chain rule, the expression on the righthand side of \eqref{GrossSimplification} is $k+1$ times differentiable. This completes the proof of the inductive step, and thus $g$ is smooth almost everywhere.
\end{proof}

%%%%%%%%%%%%%%%%%%%%%%%%%%%%%%%%%%%%%%%%%%%%%%%%%%%%%%%%%%%%%%%%%%%%%%%%%%%%%%%%%%%%%%%%%%%%%%%%%%%%%%%%%%%%%%%%%%%%%%%%%%%%
%%%%%%%%%%%%%%%%%%%%%%%%%%%%%%%%%%%%%%%%%%%%%%%%%%%%%%%%%%%%%%%%%%%%%%%%%%%%%%%%%%%%%%%%%%%%%%%%%%%%%%%%%%%%%%%%%%%%%%%%%%%%
%%%%%%%%%%%%%%%%%%%%%%%%%%%%%%%%%%%%%%%%%%%%%%%%%%%%%%%%%%%%%%%%%%%%%%%%%%%%%%%%%%%%%%%%%%%%%%%%%%%%%%%%%%%%%%%%%%%%%%%%%%%%
\section{A System of Integral Equations}

To establish our integral equations, we first show that the optimal $g$ is even.

\begin{lem}\label{EvenLemma}
  The optimal $g$ is even.
\end{lem}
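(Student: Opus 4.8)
The plan is to exploit the uniqueness of the solution to the operator equation \eqref{CriterionForOptimal} together with the evenness of each kernel $m_{\mathcal{G}}$. First I would observe that every $m_{\mathcal{G}}$ in \eqref{GroupKernels} is an even function: the constant $\tfrac12$ is trivially even, and $\tfrac12 {\rm I}_{[-1,1]}$ is even since $[-1,1]$ is symmetric about the origin, hence so are $1 - \tfrac12{\rm I}_{[-1,1]}$ and $-\tfrac12{\rm I}_{[-1,1]}$. Consequently the integration domain $[-\sigma,\sigma]$ being symmetric, the operator $K_{\mathcal{G},\sigma}$ commutes with the reflection $(Rf)(x) := f(-x)$; explicitly, for any $f \in L^2[-\sigma,\sigma]$,
\begin{equation*}
(K_{\mathcal{G},\sigma} f)(-x) = \int_{-\sigma}^{\sigma} m_{\mathcal{G}}(-x-y) f(y)\, dy = \int_{-\sigma}^{\sigma} m_{\mathcal{G}}(x+y) f(y)\, dy = \int_{-\sigma}^{\sigma} m_{\mathcal{G}}(x - u) f(-u)\, du,
\end{equation*}
after the substitution $u = -y$ and using $m_{\mathcal{G}}(x+y) = m_{\mathcal{G}}(-x-y)$. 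Thus $R K_{\mathcal{G},\sigma} = K_{\mathcal{G},\sigma} R$, and since $R$ obviously fixes the identity operator, $R(I + K_{\mathcal{G},\sigma}) = (I + K_{\mathcal{G},\sigma}) R$.

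Next I would apply $R$ to both sides of \eqref{CriterionForOptimal}. Since the constant function $1$ on $[-\sigma,\sigma]$ is even, $R\mathbf{1} = \mathbf{1}$, so
\begin{equation*}
(I + K_{\mathcal{G},\sigma})(Rg) = R(I + K_{\mathcal{G},\sigma})(g) = R\mathbf{1} = \mathbf{1}.
\end{equation*}
Therefore $Rg$ also solves the equation $(I + K_{\mathcal{G},\sigma})(h) = 1$. But in \S\ref{firstgoalsec} we established (via the Fredholm alternative, using that $I + K_{\mathcal{G},\sigma}$ is positive definite and hence injective) that this solution is unique. Hence $Rg = g$, i.e., $g(-x) = g(x)$ for almost every $x$, and since $g$ is Lipschitz continuous by Proposition \ref{LipschitzProp} this holds everywhere on $[-\sigma,\sigma]$; extending by zero outside preserves evenness. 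This proves the optimal $g$ is even.

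I do not anticipate a genuine obstacle here — the argument is a soft symmetry/uniqueness argument. The only point requiring a little care is the bookkeeping in the change of variables showing $R$ commutes with $K_{\mathcal{G},\sigma}$, where one must use both that $m_{\mathcal{G}}$ is even and that the domain $[-\sigma,\sigma]$ is reflection-invariant; and one should invoke the uniqueness of the solution to \eqref{CriterionForOptimal}, which was already justified in \S\ref{firstgoalsec} from compactness of $K_{\mathcal{G},\sigma}$ (Lemma \ref{KCompact}) and positive-definiteness of $I + K_{\mathcal{G},\sigma}$. An alternative, equivalent route would be to symmetrize directly: set $g_{\mathrm{ev}}(x) = \tfrac12(g(x) + g(-x))$, note it also satisfies \eqref{CriterionForOptimal} by linearity and the commutation above, and conclude $g = g_{\mathrm{ev}}$ by uniqueness; this avoids even mentioning the reflection operator explicitly but is the same idea.
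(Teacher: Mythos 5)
Your proof is correct and follows essentially the same route as the paper: the paper directly verifies that $g(-x)$ satisfies \eqref{CriterionForOptimal} by a change of variables using the evenness of $m_{\mathcal{G}}$ and then invokes uniqueness, which is exactly your argument phrased via the reflection operator $R$. The only cosmetic difference is your explicit introduction of $R$ and the commutation $RK_{\mathcal{G},\sigma}=K_{\mathcal{G},\sigma}R$, which is the same computation.
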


\begin{proof}
  The key to this proof is that any choice of $m$ from \eqref{GroupKernels} is even. We show that $g(-x)$ also satisfies \eqref{CriterionForOptimal} and so must be equal to $g$. As $m$ is even, we have
  \begin{align}
    g(-x) + \int_{-\sigma}^{\sigma} m(x - y)g(-y) dy &\ = \  g(-x) + \int_{-\sigma}^{\sigma} m(x + y) g(y) dy \nonumber\\
    &\ = \  g(-x) + \int_{-\sigma}^{\sigma} m(-x - y)g(y) dy \nonumber\\
    &\ = \  (I + K_{\mathcal{G},\sigma})(g)(-x),
  \end{align}
  which is equal to one. By uniqueness, $g(x) = g(-x)$ and $g$ is even.
\end{proof}

By the results of the previous section, finding the optimal $\widehat{\phi}$ for $2 < 2\sigma < 3$ involves finding the optimal $g$ for $1 < \sigma < 1.5$. We claim, (momentarily without justification) that there are three intervals of importance in our study of this function. These are
\begin{align}
  J_1 &\ := \  [0, \sigma - 1] \nonumber\\
  J_{2} &\ := \  [\sigma - 1,2 - \sigma] \nonumber\\
  J_{3} &\ := \  [2 - \sigma, \sigma].
\label{IntervalDefs}
\end{align}
As $g$ is even, it suffices to find $g$ on $[0,\sigma]$, which means finding $g$ on all of the intervals above. Examining the kernels in \eqref{GroupKernels} and the requirement \eqref{CriterionForOptimal}, we see that for $x \in J_{1}$, the optimal $g$ satisfies
\begin{equation}
  g(x) + \alpha_{2,\mathcal{G}} \int_{0}^{x + 1} g(y) dy + \alpha_{2,\mathcal{G}}\int_{0}^{1-x} g(y) dy + \alpha_{1,\mathcal{G}} \int_{0}^{\sigma} g(y) dy \ = \  1,
  \label{IntEqnOnI1}
\end{equation}
and for $x \in J_{2}$ or $J_{3}$, we have
\begin{equation}
  g(x) + \alpha_{2,\mathcal{G}} \int_{x - 1}^{\sigma} g(y) dy + \alpha_{1,\mathcal{G}}\int_{0}^{\sigma} g(y) dy  \ = \  1.
  \label{IntEqnOnI23}
\end{equation}
In equations \eqref{IntEqnOnI1} and \eqref{IntEqnOnI23}, we note that $\alpha_{1,\mathcal{G}} = 0$ for $\mathcal{G} \not = {\rm SO(odd)}$ and $1$ for $\mathcal{G} = {\rm SO(odd)}$ and $\alpha_{2,\mathcal{G}} = 1/2$ for $\mathcal{G} = {\rm SO(even)}$ and $-1/2$ for $\mathcal{G} = {\rm Sp}$ or $\mathcal{G} = {\rm SO(odd)}$.

%%%%%%%%%%%%%%%%%%%%%%%%%%%%%%%%%%%%%%%%%%%%%%%%%%%%%%%%%
%%%%%%%%%%%%%%%%%%%%%%%%%%%%%%%%%%%%%%%%%%%%%%%%%%%%%%%%%
%%%%%%%%%%%%%%%%%%%%%%%%%%%%%%%%%%%%%%%%%%%%%%%%%%%%%%%%%
\subsection{Conversion to Location-Specific System of Delay Differential Equations}\label{thirdgoalsec}
Lemma \ref{TwiceDiff} justifies differentiation of \eqref{IntEqnOnI1} and \eqref{IntEqnOnI23} under the integral signs, which gives the following system of location-specific delay differential equations:
\begin{align}
  g'(x) + \alpha_{2,\mathcal{G}}g(x + 1) - \alpha_{2,\mathcal{G}}g(1-x) \ = \  0
  \label{DelayDiff1}\\
  g'(x + 1) - \alpha_{2,\mathcal{G}}g(x) \ = \  0,
  \label{DelayDiff2}
\end{align}
where \eqref{DelayDiff1} holds for $x \in J_{1}$, and \eqref{DelayDiff2} holds for $x+1 \in J_{2}$ or $J_{3}$.

%%%%%%%%%%%%%%%%%%%%%%%%%%%%%%%%%%%%%%%%%%%%%%%%%%%%%%%%%%%%%%%%%%%%%%%%%%%%%%%%%%%%%%%%%%%%%%%%%%%%%%%%%%%%%%%%%%%%%%%%%%%%
%%%%%%%%%%%%%%%%%%%%%%%%%%%%%%%%%%%%%%%%%%%%%%%%%%%%%%%%%%%%%%%%%%%%%%%%%%%%%%%%%%%%%%%%%%%%%%%%%%%%%%%%%%%%%%%%%%%%%%%%%%%%
%%%%%%%%%%%%%%%%%%%%%%%%%%%%%%%%%%%%%%%%%%%%%%%%%%%%%%%%%%%%%%%%%%%%%%%%%%%%%%%%%%%%%%%%%%%%%%%%%%%%%%%%%%%%%%%%%%%%%%%%%%%%
\subsection{Solving The System}\label{fourthgoalsec}

%%%%%%%%%%%%%%%%%%%%%%%%%%%%%%%%%%%%%%%%%%%%%%%%%%%%%%%
%%%%%%%%%%%%%%%%%%%%%%%%%%%%%%%%%%%%%%%%%%%%%%%%%%%%%%%%%
%%%%%%%%%%%%%%%%%%%%%%%%%%%%%%%%%%%%%%%%%%%%%%%%%%%%%%%%%
%\subsection{Outer Parts -- Intervals One and Three}

\begin{lem} The optimal $g$ satisfies
%If $\widehat{\phi} = g \ast \check{g}$ with $\widehat{\phi}$ an even Schwartz function whose Fourier transform has bounded support satisfies \eqref{DelayDiff1} and \eqref{DelayDiff2}, then for some $c_{i} \in \R$, $g$ satisfies
  \begin{equation}\label{OptimalgOnTwoIntervalsLemma} \twocase{g(x) \ = \ }{c_{1} \cos\left( \frac{x}{\sqrt{2}} \right) + c_{2} \sin \left( \frac{x}{\sqrt{2}} \right)}{if $x \in J_{1}$}{c_{1}\alpha_{2,\mathcal{G}}\sqrt{2} \sin\left( \frac{x - 1}{\sqrt{2}} \right) - c_{2}\alpha_{2,\mathcal{G}} \sqrt{2} \cos\left( \frac{x-1}{\sqrt{2}} \right) + c_{3}}{if $x \in J_{3}$}
  \end{equation}
  for some $c_{i} \in \R$.
\end{lem}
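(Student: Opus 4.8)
The plan is to solve the coupled delay differential equations \eqref{DelayDiff1} and \eqref{DelayDiff2} directly, using the structure relating the values of $g$ on $J_1$ to those on $J_2 \cup J_3$. First I would focus on $J_1 = [0,\sigma-1]$. For $x \in J_1$, equation \eqref{DelayDiff1} relates $g'(x)$ to $g(x+1)$ and $g(1-x)$; since $\sigma < 3/2$ we have $1-x \in [2-\sigma,1] \subset J_2 \cup J_3$ and $x+1 \in [1,\sigma] \subset J_2 \cup J_3$, so on this range \eqref{DelayDiff2} applies to both $g(x+1)$ and $g(1-x)$ (with the appropriate sign on the argument shift). Differentiating \eqref{DelayDiff1} once more in $x$ and substituting \eqref{DelayDiff2} to eliminate $g'(x+1)$ and $g'(1-x)$, I expect to obtain a second-order constant-coefficient ODE for $g$ on $J_1$ of the form $g''(x) + \tfrac12 g(x) = 0$ (the factor $\tfrac12$ coming from $\alpha_{2,\mathcal{G}}^2 = 1/4$ together with the two contributions). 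The general solution of that ODE is $c_1\cos(x/\sqrt2) + c_2 \sin(x/\sqrt2)$, which is the claimed formula on $J_1$.

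Next I would propagate this to $J_3$. For $x \in J_3 = [2-\sigma,\sigma]$ we have $x - 1 \in [1-\sigma, \sigma-1]$, and since $g$ is even (Lemma \ref{EvenLemma}) this argument lies, up to reflection, in $J_1$ where $g$ is now known explicitly. Equation \eqref{DelayDiff2}, rewritten as $g'(x) = \alpha_{2,\mathcal{G}}\, g(x-1)$ for $x \in J_3$, then expresses $g'$ on $J_3$ in terms of the known cosine/sine expression on $J_1$; integrating once gives
\begin{equation}
g(x) \ = \ \alpha_{2,\mathcal{G}}\int g(x-1)\,dx + c_3 \ = \ c_1 \alpha_{2,\mathcal{G}}\sqrt2 \sin\!\left(\tfrac{x-1}{\sqrt2}\right) - c_2 \alpha_{2,\mathcal{G}}\sqrt2 \cos\!\left(\tfrac{x-1}{\sqrt2}\right) + c_3,
\end{equation}
using $\int \cos(u/\sqrt2)\,du = \sqrt2 \sin(u/\sqrt2)$ and $\int \sin(u/\sqrt2)\,du = -\sqrt2\cos(u/\sqrt2)$, with $c_3$ the constant of integration. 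This is exactly the stated form on $J_3$, and it holds for all the relevant groups simultaneously since only $\alpha_{2,\mathcal{G}}$ enters.

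The main obstacle I anticipate is bookkeeping rather than conceptual: one must carefully track which of \eqref{DelayDiff1}, \eqref{DelayDiff2} is valid at each point when the delayed arguments $x\pm1$, $1-x$ land in the various subintervals, and must invoke evenness at the right moment to make sense of arguments that fall in $[1-\sigma,0]$ or $[-\sigma,\sigma-1]$. There is also a subtlety that \eqref{DelayDiff1}–\eqref{DelayDiff2} only hold almost everywhere a priori, so strictly one should note that $g$ is continuous (indeed smooth a.e., by Lemma \ref{TwiceDiff} and Proposition \ref{LipschitzProp}) to upgrade the a.e. identities to genuine ODEs on the open subintervals and to justify the integration step. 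Note the lemma only asserts the form on $J_1$ and $J_3$; the behavior on the middle interval $J_2$ and the matching conditions that pin down $c_1,c_2,c_3$ (and distinguish the groups) are deferred to the subsequent sections, so I would not address those here.
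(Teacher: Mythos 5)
Your proposal is correct and matches the paper's argument in all essentials: differentiate the delay equation on $J_1$ once more, substitute \eqref{DelayDiff2} to eliminate $g'(x+1)$ and $g'(1-x)$, invoke evenness to arrive at $g''+\tfrac12 g=0$ on $J_1$, and then integrate \eqref{DelayDiff2} to propagate the solution onto $J_3$. The paper writes ``Differentiating \eqref{IntEqnOnI1}'' where it means taking a second derivative (i.e.\ differentiating \eqref{DelayDiff1}), so your phrasing is if anything slightly cleaner; both you and the paper gloss over the sign flip in the $c_2$ term when $x-1<0$, which is ultimately harmless because evenness of $g$ forces $c_2=0$.
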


Before proving this lemma, it is important to note the following symmetry among our intervals. We first set some  notation. If $a$ is a number and $I$ is an interval, \be a-I \ := \ \{x: x = a - y, y \in I\}. \ee

Note that $J_{j}$ is one of the intervals defined in
\begin{equation}
  1 - J_{1}\ \subseteq\ J_{3}.
  \label{SymmetryI1I3}
\end{equation}
We also mention that
\begin{equation}
  1 - J_{2} \ = \  J_{2},
  \label{SymmetryI2I2}
\end{equation}
though we will not use this fact until later.

\begin{proof}
Differentiating \eqref{IntEqnOnI1} yields
  \begin{equation}
    g''(x) + \alpha_{2,\mathcal{G}}g'(x+1) + \alpha_{2,\mathcal{G}}g'(1-x) \ = \  0.
    \label{DiffFirstEq}
  \end{equation}
Because of the symmetry \eqref{SymmetryI1I3}, we may use equation \eqref{IntEqnOnI23} on both the $x+1$ and $1-x$ terms. This gives us the following equation:
  \begin{equation}\label{ApplyingSecondEquation}
    g''(x) + \alpha_{2,\mathcal{G}}^{2}(g(x) + g(-x)) \ = \ 0.
  \end{equation}

By Lemma \ref{EvenLemma}, $g$ is even. Moreover, $\alpha_{2,\mathcal{G}} = \pm 1/2$. So, for any group $\mathcal{G}$, \eqref{ApplyingSecondEquation} simplifies to
  \begin{equation}\label{FinalDiffEq-33Case}
    g''(x) + \frac{1}{2} g(x) \ = \  0,
     \end{equation}
which is a standard differential equation and easily solved. Its solution, which applies to $g$ on $J_{1}$, is $c_{1}\cos(x/\sqrt{2}) + c_{2} \sin(x/\sqrt{2})$ for some constants $c_1, c_2$. We find the three parameter family for $g$ on $J_{3}$ by applying \eqref{DelayDiff2} to this result.
\end{proof}

\ \\
%%%%%%%%%%%%%%%%%%%%%%%%%%%%%%%%%%%%%%%%%%%%%%%%%%%%%%%%%
%%%%%%%%%%%%%%%%%%%%%%%%%%%%%%%%%%%%%%%%%%%%%%%%%%%%%%%%%
%%%%%%%%%%%%%%%%%%%%%%%%%%%%%%%%%%%%%%%%%%%%%%%%%%%%%%%%%
%\subsection{Inner Parts -- Interval Two}
Note that because of the symmetry \eqref{SymmetryI2I2}, the associated delay differential equation on interval two is different. It is
\begin{equation}
  g'(x) - \alpha_{2,\mathcal{G}}g(1-x) \ = \  0.
  \label{DiffEqVersion}
\end{equation}

\begin{lem}
  The delay differential equation \eqref{DiffEqVersion} has a unique one-parameter family of solutions in the class $C^2$. That family is
  \begin{equation}
    c_{1}\cos\left( \alpha_{2,\mathcal{G}}x - \left( \frac{\pi + 2\alpha_{2,\mathcal{G}}}{4} \right) \right)
    \label{OneParamFamEqnMiddlePart}
  \end{equation}
  with $c_1 \in \R$.
  \label{UniqueOneParameterFamily}
\end{lem}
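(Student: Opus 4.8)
## Proof Proposal for Lemma \ref{UniqueOneParameterFamily}

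\subsection*{Proof proposal}

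The plan is to pass from the first-order delay equation \eqref{DiffEqVersion} to a genuine second-order ODE, and then feed the delay equation back in to cut the resulting two-dimensional solution space down to one dimension. First I would note that the regularity hypothesis is essentially automatic: if $g \in C^1(J_2)$ satisfies \eqref{DiffEqVersion}, then $x \mapsto \alpha_{2,\mathcal{G}}\, g(1-x)$ is $C^1$, hence $g' \in C^1$ and $g \in C^2(J_2)$ (this also follows from Lemma \ref{TwiceDiff}). Differentiating \eqref{DiffEqVersion} gives $g''(x) = -\alpha_{2,\mathcal{G}}\, g'(1-x)$; since $1 - J_2 = J_2$ by \eqref{SymmetryI2I2}, I may apply \eqref{DiffEqVersion} at the point $1-x$ to rewrite $g'(1-x) = \alpha_{2,\mathcal{G}}\, g(x)$, and, using $\alpha_{2,\mathcal{G}} = \pm 1/2$, obtain $g''(x) + \tfrac14 g(x) = 0$ on $J_2$. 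Its general solution is $g(x) = A\cos(x/2) + B\sin(x/2)$ for constants $A,B \in \R$, so every admissible $g$ lies in this two-parameter family.

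Next I would substitute $g(x) = A\cos(x/2) + B\sin(x/2)$ back into \eqref{DiffEqVersion}, expand $g(1-x)$ by the angle-addition formulas, and match the coefficients of the linearly independent functions $\cos(x/2)$ and $\sin(x/2)$ on $J_2$. This yields two linear equations in $A$ and $B$ with coefficients built from $\cos(1/2)$, $\sin(1/2)$, and $\alpha_{2,\mathcal{G}}$. The crucial point — and the step I expect to require the most care — is that because $\alpha_{2,\mathcal{G}}^2 = 1/4$ and $\cos^2(1/2) + \sin^2(1/2) = 1$, these two equations are proportional, their common content being the single relation $B\,(1 - 2\alpha_{2,\mathcal{G}}\sin(1/2)) = 2\alpha_{2,\mathcal{G}}\, A\cos(1/2)$. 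Since $|\sin(1/2)| < 1$ and $\cos(1/2)\neq 0$, this relation is nondegenerate and nontrivial, so exactly one linear condition survives and the solution set is genuinely a one-parameter family of lines through the origin in $(A,B)$-space.

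Finally I would put $A\cos(x/2) + B\sin(x/2)$ into amplitude–phase form $c_1\cos(x/2 - \psi)$ and determine $\psi$ from the surviving relation between $A$ and $B$. Using the half-angle identities $\cos(1/2) = \cos^2(1/4) - \sin^2(1/4)$ and $1 - 2\alpha_{2,\mathcal{G}}\sin(1/2) = (\cos(1/4) - 2\alpha_{2,\mathcal{G}}\sin(1/4))^2$ (valid since $4\alpha_{2,\mathcal{G}}^2 = 1$ and $2\sin(1/4)\cos(1/4) = \sin(1/2)$), the ratio $B/A$ collapses to $\tan(\pi/4 + \alpha_{2,\mathcal{G}}/2)$ up to a sign that is absorbed by the parity of cosine into $c_1$, which identifies $\psi = (\pi + 2\alpha_{2,\mathcal{G}})/4$ and recovers exactly \eqref{OneParamFamEqnMiddlePart}; checking the two cases $\alpha_{2,\mathcal{G}} = 1/2$ and $\alpha_{2,\mathcal{G}} = -1/2$ separately confirms the formula in both. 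Apart from the coefficient-matching bookkeeping of the second paragraph — whose only real input is $\alpha_{2,\mathcal{G}}^2 = 1/4$ — everything here is routine trigonometric rearrangement, so the degeneracy of the matched system is the sole genuine obstacle.
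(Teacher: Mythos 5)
Your proposal follows the same skeleton as the paper's proof: differentiate \eqref{DiffEqVersion}, use $1-J_2=J_2$ to land on $g''+\tfrac14 g=0$, write the general solution $A\cos(x/2)+B\sin(x/2)$, and then feed the delay equation back in to impose two linear constraints on $(A,B)$ that are degenerate precisely because $4\alpha_{2,\mathcal{G}}^2=1$ — the paper phrases this as a vanishing $2\times2$ determinant \eqref{ZeroDeterminant}, you phrase it as the two coefficient-matching equations being proportional, which is the same content. The one place you genuinely go further is the final step: the paper simply cites \cite{ILS} for the fact that \eqref{OneParamFamEqnMiddlePart} is a solution and concludes by dimension-count, whereas you derive the phase $(\pi+2\alpha_{2,\mathcal{G}})/4$ directly from the surviving relation $B(1-2\alpha_{2,\mathcal{G}}\sin(1/2))=2\alpha_{2,\mathcal{G}}A\cos(1/2)$ via half-angle identities, making the argument self-contained; the half-angle computation checks out in both cases $\alpha_{2,\mathcal{G}}=\pm\tfrac12$. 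Your opening observation that the $C^2$ hypothesis is automatic for $C^1$ solutions by bootstrapping through the delay equation is also correct and is a small strengthening not stated in the paper.
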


\begin{proof}
  Differentiate \eqref{DiffEqVersion} to obtain
  \begin{align}
    f_{\mathcal{G}}''(x) \ = \  -\alpha_{2,\mathcal{G}}f_{\mathcal{G}}'(1-x) \ = \  -\alpha_{2,\mathcal{G}}^{2}f_{\mathcal{G}}(x) \ = \  -\frac{1}{4}f_{\mathcal{G}}(x),   \label{SecondOrderDiffForf}
  \end{align}
  where we obtain the second equality by applying \eqref{DiffEqVersion} to $f_{\mathcal{G}}'(1-x)$. The third equality is simply a subsitution for $\alpha_{2,\mathcal{G}}$. However, equation \eqref{SecondOrderDiffForf} is a standard linear differential equation that has a two-parameter family of solutions given by
  \begin{equation}
    c_1 \cos \left( \frac{x}{2} \right) + c_2 \sin \left( \frac{x}{2}  \right).
    \label{TwoParameterFamily}
  \end{equation}
  We now apply \eqref{DiffEqVersion} to narrow this family down to a one-parameter family. The differential equation \eqref{DiffEqVersion} and trigonometric angle addition formulae yield the relation
  \begin{align}\label{NoNumber,ButTheLinAlgEqn}
    \frac{1}{2}\left( -c_1\sin\left( \frac{x}{2} \right) + c_2 \cos \left( \frac{x}{2} \right) \right) &\ = \   \alpha_{2,\mathcal{G}} \left( c_1 \cos\left( \frac{1}{2}\right) + c_{2}\sin\left( \frac{1}{2} \right)  \right) \cos\left( \frac{x}{2}\right) \nonumber\\
    &\ \ \ \ \ +\ \alpha_{2,\mathcal{G}} \left( c_1 \sin\left( \frac{1}{2} \right) - c_{2} \cos\left( \frac{1}{2} \right) \right)\sin\left( \frac{x}{2} \right).
  \end{align}
  In order for the expression above to vanish, we need the coefficients on $\cos(x/2)$ and $\sin(x/2)$ to both be zero. This translates into the requirement that the vector
  $\begin{pmatrix}
    c_1 \\
    c_2
  \end{pmatrix}$ be in the nullspace of the matrix
  \begin{equation}
      \begin{pmatrix}
	2 \alpha_{2,\mathcal{G}} \cos(1/2) & 2\alpha_{2,\mathcal{G}}\sin(1/2) - 1 \\
    2\alpha_{2,\mathcal{G}}\sin(1/2) + 1 & -2\alpha_{2,\mathcal{G}}\cos(1/2)
  \end{pmatrix}.
  \label{NullspaceRequirement}
\end{equation}
Note the matrix in \eqref{NullspaceRequirement} has determinant
\begin{equation}
  -4\alpha_{2,\mathcal{G}}^2(\sin^2(1/2) + \cos^2(1/2)) + 1 \ = \  0
  \label{ZeroDeterminant}
\end{equation}
because $\alpha_{2,\mathcal{G}} = \pm 1/2$. We know from \cite{ILS} that for each $\mathcal{G}$ the function in \eqref{OneParamFamEqnMiddlePart} is a solution to \eqref{DiffEqVersion}.  From our determinant argument, we know all solutions to that differential equation are all scalar multiples of a single nonzero solution, completing the proof.
\end{proof}

%%%%%%%%%%%%%%%%%%%%%%%%%%%%%%%%%%%%%%%%%%%%%%%%%%%%%%%%%%%%%%%%%%%%%%%%%%%%%%%%%%%%%%%%%%%%%%%%%%%%%%%%%%%%%%%%%%%%%%%%%%%%
%%%%%%%%%%%%%%%%%%%%%%%%%%%%%%%%%%%%%%%%%%%%%%%%%%%%%%%%%%%%%%%%%%%%%%%%%%%%%%%%%%%%%%%%%%%%%%%%%%%%%%%%%%%%%%%%%%%%%%%%%%%%
%%%%%%%%%%%%%%%%%%%%%%%%%%%%%%%%%%%%%%%%%%%%%%%%%%%%%%%%%%%%%%%%%%%%%%%%%%%%%%%%%%%%%%%%%%%%%%%%%%%%%%%%%%%%%%%%%%%%%%%%%%%%
\section{Finding Coefficients}\label{fifthgoalsec}
Substituting values for $\alpha_{i,\mathcal{G}}$ for $i=1,2$, we find
\begin{equation}\label{SOEvenExplicitOptimalFunction2}\resizebox{.9\hsize}{!}{$
    g_{{\rm SO(even)}}(x) \ = \  \lambda_{{\rm SO(even)}}
    \begin{cases}
      c_{1,\mathcal{G}} \cos\left( \frac{|x|}{\sqrt{2}} \right) + c_{2,\mathcal{G}} \sin\left( \frac{|x|}{\sqrt{2}} \right) \ &|x| \ \le\ \sigma - 1 \\
      \cos\left( \frac{|x|}{2} - \frac{(\pi + 1)}{4} \right) \ &\sigma - 1\ \le\ |x|\ \le\ 2 - \sigma\\
      \frac{c_{1,\mathcal{G}}}{\sqrt{2}} \sin\left( \frac{x-1}{\sqrt{2}} \right) - \frac{c_{2,\mathcal{G}}}{\sqrt{2}} \cos\left( \frac{x-1}{\sqrt{2}} \right) + c_{3} \ &2 - \sigma\ <\ |x|\ \le\ \sigma	
    \end{cases}
  $}
      \end{equation}
  for $\mathcal{G} = {\rm SO(even)}$ and
  \begin{equation}\label{SOOdd/SpExplicitOptimalFunction2}
    g_{\mathcal{G}}(x) \ = \  \lambda_{\mathcal{G}}
    \begin{cases}
      c_{1,\mathcal{G}} \cos\left( \frac{|x|}{\sqrt{2}} \right) + c_{2,\mathcal{G}} \sin\left( \frac{|x|}{\sqrt{2}} \right)\ &|x|\ \le\ \sigma - 1 \\
      \cos\left( \frac{|x|}{2} + \frac{(\pi - 1)}{4} \right) \ &\sigma - 1\ \le\ |x|\ \le\ 2 - \sigma\\
      \frac{-c_{1,\mathcal{G}}}{\sqrt{2}} \sin\left( \frac{x-1}{\sqrt{2}} \right) + \frac{c_{2,\mathcal{G}}}{\sqrt{2}} \cos\left( \frac{x-1}{\sqrt{2}} \right) + c_{3}\ &2 - \sigma\ <\ |x|\ \le\ \sigma	
    \end{cases}
      \end{equation}
  for $\mathcal{G} = {\rm SO(odd)}$ or Sp.

  \begin{lem}\label{OptimalCoeffExist}
    There exist unique, computable coefficients $c_{i, \mathcal{G}}, \lambda_{\mathcal{G}}$ (for $i = 1,2,3$) so that the functions \eqref{SOEvenExplicitOptimalFunction2} and \eqref{SOOdd/SpExplicitOptimalFunction2} satisfy $(I + K_{\mathcal{G}, \sigma}) = 1$ and are thus optimal.
  \end{lem}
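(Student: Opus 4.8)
The plan is to pin down the three constants $c_{1,\mathcal{G}}, c_{2,\mathcal{G}}, c_{3,\mathcal{G}}$ (together with the scaling $\lambda_{\mathcal{G}}$) by imposing exactly the linear constraints that the piecewise formula for $g_{\mathcal{G}}$ must satisfy in order to be a genuine solution of $(I+K_{\mathcal{G},\sigma})g = 1$. First I would exploit the evenness of $g$ (Lemma~\ref{EvenLemma}): since $g$ is even and, by Lemma~\ref{TwiceDiff}, smooth on the relevant open intervals, the only way the three pieces in \eqref{SOEvenExplicitOptimalFunction2} (resp.\ \eqref{SOOdd/SpExplicitOptimalFunction2}) can glue into a function that is continuous (indeed $C^1$, since the delay-differential equations \eqref{DelayDiff1}--\eqref{DelayDiff2} express $g'$ in terms of values of $g$) is if the one-sided limits match at the junction points $x = \sigma - 1$ and $x = 2 - \sigma$. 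That yields continuity equations at $\sigma - 1$ (matching the $J_1$-piece to the $J_2$-piece) and at $2-\sigma$ (matching the $J_2$-piece to the $J_3$-piece), and matching of first derivatives at the same two points; because the $J_2$-piece was already normalized in Lemma~\ref{UniqueOneParameterFamily} to have no free constant, these gluing conditions are inhomogeneous linear equations in $c_{1,\mathcal{G}}, c_{2,\mathcal{G}}, c_{3,\mathcal{G}}$.

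Next I would bring in one more necessary condition to fix $c_{2,\mathcal{G}}$ and, separately, the overall scale $\lambda_{\mathcal{G}}$. Smoothness of $g$ at the origin forces $g'(0) = 0$ (an even $C^1$ function has vanishing derivative at $0$); applied to the $J_1$-piece $c_{1}\cos(x/\sqrt2) + c_2\sin(x/\sqrt2)$ this gives $c_{2,\mathcal{G}} = 0$ outright — which is exactly why the statement of Theorem~\ref{thm:mainresult} has no $c_2$ term. With $c_2$ eliminated, the continuity/derivative-matching equations at $\sigma-1$ and $2-\sigma$ become a small linear system for $c_{1,\mathcal{G}}$ and $c_{3,\mathcal{G}}$; one checks it is nonsingular for $1 < \sigma < 1.5$ (the determinant is an explicit trigonometric expression that does not vanish on this range, which can be verified directly, e.g.\ using that $\tan((\sigma-1)/\sqrt2)$ is finite and the relevant cosines are bounded away from $0$). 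Finally the scaling $\lambda_{\mathcal{G}}$ is the unique constant making the whole integral equation \eqref{CriterionForOptimal} hold exactly rather than merely up to a multiplicative constant — concretely, after the differential-equation analysis every candidate solves the \emph{differentiated} equations automatically, so plugging the piecewise $g$ back into the \emph{un}differentiated equation \eqref{IntEqnOnI1} at a single convenient point (say $x=0$) gives one more linear equation that determines $\lambda_{\mathcal{G}}$.

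Uniqueness then follows for free: by the Fredholm/positive-definiteness argument recorded after Lemma~\ref{KCompact}, equation \eqref{CriterionForOptimal} has a \emph{unique} solution $g$, and Lemmas~\ref{EvenLemma}, \ref{TwiceDiff}, \ref{OptimalgOnTwoIntervalsLemma}, and \ref{UniqueOneParameterFamily} show any solution must have the stated piecewise form with the $J_2$-piece normalized as in \eqref{OneParamFamEqnMiddlePart}; the finitely many linear constraints above then cut the remaining parameter space down to a single point, so the constants $c_{i,\mathcal{G}}, \lambda_{\mathcal{G}}$ exist and are unique, and the resulting $g$ — being a solution of \eqref{CriterionForOptimal} — is optimal by the first step of the proof. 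The one point requiring genuine care, rather than routine computation, is verifying that the gluing occurs at precisely the $C^1$ level (not merely $C^0$) and that the associated linear system is nonsingular throughout $1 < \sigma < 1.5$; once the correct set of matching conditions is identified, producing the explicit $c_{i,\mathcal{G}}$ and $\lambda_{\mathcal{G}}$ in \eqref{ActualCoefficientsSOEven}--\eqref{SOOddScaling} is a finite linear-algebra exercise. I expect the bookkeeping for which delay-differential relation is active on which sub-interval — in particular the role of the reflection symmetries \eqref{SymmetryI1I3} and \eqref{SymmetryI2I2} in deciding whether the derivative-matching at $2-\sigma$ uses \eqref{DelayDiff2} or the modified equation \eqref{DiffEqVersion} — to be the main source of potential error.
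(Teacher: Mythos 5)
Your proposed route is close in spirit to the paper's but contains a genuine error at the step you yourself flag as the delicate one: $g$ is \emph{not} $C^1$ at the junction point $x=\sigma-1$, so imposing matching of first derivatives there is a false condition that the true optimal $g$ does not satisfy. To see the kink, write $K_{\mathcal G,\sigma}g(x)=\alpha_{2,\mathcal G}\int_{\max(-\sigma,x-1)}^{\min(\sigma,x+1)}g(y)\,dy+(\text{const})$; for $x$ just below $\sigma-1$ both limits of integration are moving, giving $(K g)'(x)=\alpha_{2,\mathcal G}\bigl(g(x+1)-g(x-1)\bigr)$, while for $x$ just above $\sigma-1$ the upper limit saturates at $\sigma$ and $(K g)'(x)=-\alpha_{2,\mathcal G}\,g(x-1)$. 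The one-sided derivatives thus differ by $\alpha_{2,\mathcal G}\,g(\sigma^-)$, which is nonzero precisely because the optimal $g$ has a jump at $\pm\sigma$ (visible in Figure~\ref{fig:optplots}); since $g=1-Kg$, the same nonzero jump appears in $g'$. Your parenthetical argument that $C^1$-ness follows ``since the delay-differential equations express $g'$ in terms of values of $g$'' breaks down exactly here: on $J_1$ the expression \eqref{DelayDiff1} involves $g(x+1)$, which references $g(\sigma^-)$ as $x\to(\sigma-1)^-$, whereas the expression \eqref{DiffEqVersion} valid on $J_2$ does not — the two one-sided limits of $g'$ genuinely disagree. (At the other junction $2-\sigma$ you are fine: both sides use the same delay relation $g'(x)=\alpha_{2,\mathcal G}g(1-x)$, so $C^1$-matching there is a consequence of $C^0$-matching at $\sigma-1$, not an extra constraint.)

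What the paper actually imposes is one $C^0$-continuity condition at $\sigma-1$ together with two conditions $(I+K_{\mathcal G})(g)(0)=(I+K_{\mathcal G})(g)(\tfrac12)$ and $(I+K_{\mathcal G})(g)(0)=(I+K_{\mathcal G})(g)(\sigma)$ — i.e.\ sampling the undifferentiated operator equation once in each of $J_1,J_2,J_3$ to force the piecewise-constant function $(I+K_{\mathcal G})g$ to take a single value — and then determines $\lambda_{\mathcal G}$ by $\lambda_{\mathcal G}=((I+K_{\mathcal G,\sigma})(\widetilde g)(0))^{-1}$, exactly as you do. Your observation that $c_{2,\mathcal G}=0$ follows from evenness ($g'(0)=0$, since $0$ is interior to $J_1$) is correct and is implicit in the paper's linear system, where subtracting the first two rows of \eqref{SO(even)MatrixEqn} gives $\sin((\sigma-1)/\sqrt2)\,c_2=0$. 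In fact a \emph{corrected} version of your proof does go through: drop the $C^1$-matching entirely and keep only $C^0$-continuity at both $\sigma-1$ and $2-\sigma$ together with $c_2=0$. Since the delay differential equations are the derivatives of the integral equations, they make $(I+K_{\mathcal G})g$ constant on each of $J_1,J_2,J_3$; since $K_{\mathcal G}g$ is always continuous and you have arranged $g$ to be continuous, $(I+K_{\mathcal G})g$ is a continuous piecewise-constant function and hence globally constant, which is equivalent to the paper's two operator-sampling conditions. As written, though, demanding $C^1$ regularity at $\sigma-1$ produces an overdetermined, inconsistent system and must be removed.
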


  \begin{proof}
    We use \eqref{CriterionForOptimal} and Lemma \ref{TwiceDiff} to find more necessary conditions on such a $g_{\mathcal{G}}$. In particular, we impose the three relations:
  \begin{align}
    \lim_{x \to (\sigma - 1)^{-}}g_{1,\mathcal{G}}(x) &\ = \  \lim_{x \to (\sigma - 1)^{+}}g_{\mathcal{G}}(x) \label{Req1:LeftContinuity}\\
    (I + K_{\mathcal{G}})(g_{\mathcal{G}})(0) &\ = \  (I + K_{\mathcal{G}})(g_{\mathcal{G}})(.5) \label{Req2:LeftAndMidScalarsAgree}\\
(I + K_{\mathcal{G}})(g_{\mathcal{G}})(0) &\ = \  (I + K_{\mathcal{G}})(g_{\mathcal{G}})(\sigma).
    \label{Req3:LeftAndRightScalarsAgree}
  \end{align}
  The first gives continuity, the second and third ensure that $(I + K_{\mathcal{G},\sigma})$ is constant; however, they do not ensure it is 1. That is accomplished by using $\lambda_{\mathcal{G}}$ to appropriately scale down the function. This gives us the matrix equations
 \begin{equation}
   \resizebox{.9\hsize}{!}{$
  \begin{pmatrix}
    \cos\left( \frac{\sigma - 1}{\sqrt{2}} \right) & \sin\left( \frac{\sigma - 1}{\sqrt{2}} \right) & 0 \\
    \cos\left( \frac{\sigma - 1}{\sqrt{2}} \right) & 0 & 0 \\
 \frac{1}{\sqrt{2}}\sin\left( \frac{\sigma - 1}{\sqrt{2}} \right) + \cos\left( \frac{\sigma - 1}{\sqrt{2}} \right) & \sqrt{2} - \frac{1}{\sqrt{2}}\cos\left( \frac{\sigma -1}{\sqrt{2}} \right) & -1
  \end{pmatrix}
  \begin{pmatrix}
    c_1 \\
    c_2 \\
    c_3
  \end{pmatrix}
  \ = \
  \begin{pmatrix}
    g_{2}(\sigma - 1) \\
    g_{2}(\sigma - 1) \\
    g_{2}(\sigma - 1) - g_{2}(2 - \sigma)
  \end{pmatrix}
  $
}
  \label{SO(even)MatrixEqn}
\end{equation}
for $\mathcal{G} = {\rm SO(even)}$ and
\begin{equation}
  \resizebox{.9\hsize}{!}{$
  \begin{pmatrix}
    \cos\left( \frac{\sigma - 1}{\sqrt{2}} \right) & \sin\left( \frac{\sigma - 1}{\sqrt{2}} \right) & 0 \\
    \cos\left( \frac{\sigma - 1}{\sqrt{2}} \right) & 0 & 0 \\
 \frac{-1}{\sqrt{2}}\sin\left( \frac{\sigma - 1}{\sqrt{2}} \right) + \cos\left( \frac{\sigma - 1}{\sqrt{2}} \right) & -\sqrt{2} + \frac{1}{\sqrt{2}}\cos\left( \frac{\sigma-1}{\sqrt{2}} \right) & -1
  \end{pmatrix}
  \begin{pmatrix}
    c_1 \\
    c_2 \\
    c_3
  \end{pmatrix}
  \ = \
\begin{pmatrix}
    g_{2}(\sigma - 1) \\
    g_{2}(\sigma - 1) \\
    g_{2}(\sigma - 1) - g_{2}(2 - \sigma).
  \end{pmatrix}
  $
}
  \label{SO(odd)AndSpMatrixEqn}
\end{equation}
for $\mathcal{G} = {\rm SO(odd)}$ or ${\rm Sp}$.  Here, $g_{2}$ is $g_{\mathcal{G}}$ restricted to $J_{2}$. \\

Expanding these matrices along the their third columns, we see that
\begin{equation}
  \left| A_{{\rm SO(even)}} \right| \ = \  \left| A_{\textrm{SO(odd)/Sp}} \right| \ = \  \cos\left( \frac{\sigma - 1}{\sqrt{2}} \right) \sin \left( \frac{\sigma - 1}{\sqrt{2}} \right),
  \label{The3x3Determinants}
\end{equation}
which are both nonzero for $1 < \sigma < 1.5$. Solving the matrix equations, we obtain
\begin{eqnarray}
  c_{1,{\rm SO(even)}} &\ = \ &  \cos \left(\frac{\sigma-1}{2}+\frac{1}{4} (-1-\pi )\right) \sec \left(\frac{\sigma-1}{\sqrt{2}}\right) \notag  \\
  c_{2,{\rm SO(even)}} &\ = \ & 0 \notag\\
  c_{3,{\rm SO(even)}} &\ = \ & \sin \left(\frac{1}{4} (2 \sigma+3 \pi -3)\right)+\frac{\sin \left(\frac{1}{4} (-2 \sigma+3 \pi +3)\right) \tan \left(\frac{\sigma-1}{\sqrt{2}}\right)}{\sqrt{2}}  \label{ActualCoefficientsSOEven}
\end{eqnarray}
and
\begin{eqnarray}
  c_{1,\mathcal{G}} &\ = \ & \cos \left(\frac{1-\sigma}{2}+\frac{1-\pi }{4}\right) \sec \left(\frac{\sigma-1}{\sqrt{2}}\right) \nonumber\\
  c_{2,\mathcal{G}} &\ = \ & 0 \notag \nonumber\\
  c_{3,\mathcal{G}} &\ = \ & \sin \left(\frac{1}{4} (-2 \sigma+3 \pi +3)\right)-\frac{\sin \left(\frac{1}{4} (2 \sigma+3 \pi -3)\right) \tan \left(\frac{\sigma-1}{\sqrt{2}}\right)}{\sqrt{2}}
  \label{ActualCoeffSOOdd/Sp}
\end{eqnarray}
for $\mathcal{G} = {\rm SO(odd)}$ or ${\rm Sp}$.

We currently have $(I + K_{\mathcal{G},\sigma})(\widetilde{g}_{\mathcal{G}}) = c$ for some constant $c$. Here, $\widetilde{g}_{\mathcal{G}}$ is the unscaled optimal function. As some of our $c_{i,\mathcal{G}}$ are nonzero and the operator $(I + K_{\mathcal{G}}, \sigma)$ is positive definite, this constant is nonzero and it can therefore be scaled to be one. We find the correct scaling factor by computing $((I + K_{\mathcal{G},\sigma})(\widetilde{g}_{\mathcal{G}})(0))^{-1}$, setting that equal to $\lambda_{\mathcal{G}}$ in \eqref{SOEvenExplicitOptimalFunction2} and \eqref{SOOdd/SpExplicitOptimalFunction2}. From these computations, we find
  \begin{eqnarray}
    \lambda_{\mathcal{G}, \sigma} & \ = \  &\widetilde{g}_{\mathcal{G}}(0) + \frac{1}{2}\int_{-1}^{1} \widetilde{g}_{\mathcal{G}}(y) dy \nonumber\\
    &\ = \ &   2 \sqrt{2} \sin \left(\frac{1}{4} (3-2 \sigma)\right)+(\sigma-1) \sin \left(\frac{1}{4} (-2 \sigma+\pi +3)\right) \nonumber\\
    & & \ \ \ \ +\ \frac{1}{2} \sin \left(\frac{1}{4} (2 \sigma+\pi -3)\right) \left(\sqrt{2} (s+1) \tan \left(\frac{s-1}{\sqrt{2}}\right)+2\right) \label{SO(even)Scaling}
  \end{eqnarray}
  for $\mathcal{G} =   {\rm SO(even)}$, and
  \begin{eqnarray}
    \lambda_{\mathcal{G},\sigma} &\ = \ & \widetilde{g}_{\mathcal{G}}(0) - \frac{1}{2} \int_{-1}^{1} g_{\mathcal{G}}(y) dy \nonumber\\
    &\ = \ &  -2 \sqrt{2} \sin \left(\frac{1}{4} (3-2 \sigma)\right)+(\sigma-1) \cos \left(\frac{1}{4} (2 \sigma+3 \pi -3)\right) \nonumber\\
    & & \ \ \ \ +\ \frac{1}{2} \sin \left(\frac{1}{4} (-2 \sigma+\pi +3)\right) \left(\sqrt{2} (\sigma-3) \tan \left(\frac{\sigma-1}{\sqrt{2}}\right)+2\right)
     \label{SpScaling}
  \end{eqnarray}
  for $\mathcal{G} = {\rm Sp}$, and
\begin{eqnarray}
  \lambda_{\mathcal{G}} &\ = \ & \lambda_{Sp} + \int_{-\sigma}^{\sigma} \widetilde{g}_{\mathcal{G}}(y) dy  \nonumber\\
  &\ = \ & \lambda_{Sp} + 4 (\sigma-1) \sin \left(\frac{1}{4} (2 \sigma+\pi -3)\right)+4 \sqrt{2} \sin \left(\frac{1}{4} (3-2 \sigma)\right) \nonumber\\
  & & \ \ \ \ -2 \sqrt{2} (\sigma-2) \sin \left(\frac{1}{4} (-2 \sigma+\pi +3)\right) \tan \left(\frac{\sigma-1}{\sqrt{2}}\right)
 \label{SOOddScaling}
\end{eqnarray}
for $\mathcal{G} = {\rm SO(odd)}$, completing the proof.
\end{proof}

\section{Conclusion and Future Work}\label{sec:conclusion}

In \cite{F} we include similar calculations for $3 < \sigma < 4$. We conjecture that these methods will provide solutions for all $\sigma \in \R^{+}$. In this pursuit, there are two important steps. The first is solving the system of delay differential equations. This gives a family of solutions. The second is taking the output of that system and finding the correct system of equations that give us the coefficients to pick our optimal $g$ out of that family. Preliminary calculations suggest the second step will be more difficult than the first; however, even
solving the first problem in general, or providing an algorithmic approach, is important progress as it reduces the problem to an optimization over a finite-dimensional space, as opposed to an infinite dimensional one.

While currently the best results (assuming no more than GRH) for showing agreement between number theory and random matrix theory for the 1-level density are only for ${\rm supp}(\hphi) \subset (-2, 2)$, in some cases larger support is attainable through additional assumptions. One example is the slight improvement for cuspidal newforms in \cite{ILS} under their Hypothesis S. Another are families of Dirichlet characters, where Fiorilli-Miller \cite{FiM} improve up to $(-4, 4)$ under some weak assumptions about the distribution of primes in residue classes (with stronger ones arbitrarily large support is attained). Thus there are already situations where we can gainfully employ these new optimal test functions in these expanded regimes.

Additionally in \cite{F} we hope to generalize these arguments to the $n$-level densities, and then either there or in future work examine the slight modifications needed in the optimal function if we have lower order terms.

%%%%%%%%%%%%%%%%%%%%%%%%%%%%%%%%%%%%%%%%%%%%%%%%%%%%%%%%%%%%%%%%%%%%%%%%%%%%%%%%%%%%%%%%%%%%%%%%%%%%
%%%%%%%%%%%%%%%%%%%%%%%%%%%%%%%%%%%%%%%%%%%%%%%%%%%%%%%%%%%%%%%%%%%%%%%%%%%%%%%%%%%%%%%%%%%%%%%%%%%%
%%%%%%%%%%%%%%%%%%%%%%%%%%%%%%%%%%%%%%%%%%%%%%%%%%%%%%%%%%%%%%%%%%%%%%%%%%%%%%%%%%%%%%%%%%%%%%%%%%%%

\ \\

\end{document}